\newtheorem{thm}{Theorem}[section]
\newtheorem{lem}[thm]{Lemma}
\newtheorem{prop}[thm]{Proposition}
\newtheorem{cor}[thm]{Corollary}
\newtheorem*{rmk}{Remark}
\newcommand{\Z}{\mathbb{Z}}
\newcommand{\R}{\mathbb{R}}
\newcommand{\cA}{\mathcal{A}}
\newcommand{\C}{\mathbb{C}}
\newcommand{\Q}{\mathbb{Q}}
\newcommand{\cD}{\mathcal{D}}
\newcommand{\cN}{\mathcal{N}}
\newcommand{\cM}{\mathcal{M}}
\newcommand{\T}{\mathbb{T}}
\newcommand{\Span}{\mathrm{Span}\,}
\newcommand{\pos}{P\"{o}schel}
\newcommand{\floor}[1]{\lfloor #1 \rfloor}
\newcommand{\dist}{\mathrm{dist}\,}
\newcommand{\bmat}[1]{\begin{bmatrix}#1\end{bmatrix}}
\author{Jianlu Zhang\dag}
\address{\dag\  Institute of Theoretical Studies, ETH Z\"urich, CH-8092 Z\"urich, Switzerland }
\email{jianlu.zhang@math.ethz.ch}
\author{Ke Zhang\ddag}
\address{\ddag\ Department of Mathematics, University of Toronto, Toronto, Canada}
\email{kzhang@math.toronto.edu}
\subjclass{Primary 37J40; Secondary 70H08}
\keywords{quasi convex Hamiltonian, Nekhoroshev estimate, Arnold diffusion}
\title[Global stability of nearly integrable systems]{Improved stability for analytic quasi-convex  nearly integrable systems and optimal speed of Arnold diffusion}
\begin{document}
\maketitle

\begin{abstract}
We improve the global Nekhoroshev stability  for  analytic quasi-convex nearly integrable Hamiltonian systems. The new stability result is optimal, as it matches the fastest speed of Arnold diffusion. 
\end{abstract}

\section{Introduction}

We consider a real analytic Hamiltonian 
\[
	H(\theta, I) = h(I) + f(\theta, I), \quad 
	I \in \R^n, \quad \theta \in \T^n = (\R/2\pi\Z)^n,
\]
with $|f| < \epsilon \ll 1$. It is a classical result of Nekhoroshev (\cite{Nek77}, \cite{Nek79}) that when $h(I)$ satisfies a non-degeneracy condition known as steepness (see also the modern treatments of \cite{Nie07}, \cite{BN12}), the system enjoys a global stretched exponential stability, of the type 
\[
	\|I(t) - I(0)\| \le C \epsilon^b, \quad \text{ for } |t| \le 
	\exp\left( - C^{-1}  \epsilon^{-a} \right).
\]
In the case when the integrable Hamiltonian is quasi-convex (see definition below), the system enjoys the largest stability exponent $b$.  Lochak and Neishtadt,  also P\"{o}schel (see for example \cite{LN92}, \cite{Loc92}, \cite{Pos93}) obtained the exponents 
\[
	a = b = \frac{1}{2n}. 
\] 
Lochak also discovered the remarkable phenomenon known as ``stability by resonance'', that if the initial condition is close to a $d$-resonance of low order, then one expects the stability exponents $a = b = \frac{1}{2(n-d)}$. By taking advantage of this fact, and that $1-$resonances divide the space, in \cite{BM11}, Bounemoura and Marco obtained larger stability exponent $a$ by allowing larger stability region (i.e. smaller $b$). The exponents obtained are 
\[
	b = (n-1) \sigma, \quad  a  =  \frac{1}{2(n-1)} - \sigma 
\]
where $\sigma>0$ can be arbitrarily small. 
The exponent $a$ can be taken to be $\frac{1}{2(n-1)}$ if one allows stability region of order $1$.

On the flip side, one is interested in the instability question known as Arnold diffusion. This research was started by the nominal work of Arnold (\cite{Arn64}), where he discovered the first mechanism for instability for nearly integrable Hamiltonian systems. Bessi (\cite{Bes96}, \cite{bes97}) proved that for $n =3, 4$,  there exists diffusion orbits $(\theta, I)(t)$, for which there exists $t>0$ such that   
\[
	\|I(t) - I(0)\| \ge C^{-1}, \quad |t| \le C \exp \left( C^{-1} \epsilon^{-\frac{1}{2(n-2)}} \right). 
\]
This result was then generalized to arbitrary $n \ge 5$ by the second author of this paper (\cite{Zha11}, see also related work in \cite{LM05}). The reason for the exponent $\frac{1}{2(n-2)}$ is due to restriction of Arnold's mechanism: the orbit constructed using Arnold's idea must always cross a double resonance, therefore the exponents obtained are the best allowed in that class. 

Up to now, there was still a gap between the best lower bound and upper bound of the stability exponent $a$: 
\[
	\frac{1}{2(n-1)}-\sigma \le a < \frac{1}{2(n-2)}. 
\]
In this paper, we close this gap by improving the stability exponents to 
\begin{equation}
  \label{eq:opt-exp}
  	b = \frac{n-2}{4} \sigma, \quad a = \frac{1}{2(n-2)} - \sigma. 
\end{equation}
Thus, the stability exponent $a$ can be arbitrary close to $\frac{1}{2(n-2)}$, and for Arnold diffusion, the exponent $\frac{1}{2(n-2)}$ is optimal. 

We obtain the improvements by separating the frequency space into two sets, one is close to resonances of order up to $|\log \epsilon|$, and the complement which is sufficiently non-resonant. In the non-resonant region we provide an improved stability result using first a normal form, then applying the Nekhoroshev's theory. In the resonant region, we apply an argument similar to the one in \cite{BM11}, to show that the fast diffusion orbit has to be close to a double resonance. 

The paper is structured as follows: in section~\ref{sec:main} we introduce notations and formulate the result. We also reduce the main theorem to two stability results, in the non-resonant and resonant regions. These results are proven in sections \ref{sec:non-res} and \ref{sec:stab-res}. 

\section{Formulation of the main result}
\label{sec:main}

 For $D \subset \R^n$ and  $r >0$ define:
\[
	V_r D = \{ I \in \C^n: \quad d(I, B) < r\} , \quad 
	U_r D = V_r D  \cap \R^n, 
\]
and 
\[
	V_{r,s}D = 
	\{ (\theta, I) \in \C^n \times (\C/2\pi \Z)^n: 
	\quad 
	d(I, B) < r, \quad |\Im(\theta)| < s. 
	\}
\]
where $d(x,y) = \max_i|x_i - y_i|$ is induced by the sup-norm in $\C^n$. consider the space $\cA_{r, s}(D)$ of real analytic functions  $\varphi(\theta, I)$ that is complex analytic on $V_{r,s}D$. The norm on this space is the sup-norm 
\[
	|\varphi|_{D, r, s} = \sup_{(\theta, I) \in V_{r,s}D}|\varphi(\theta, I)|. 
\]

Let $R, r_0, s_0, m, M >0$  be parameters let  $B(0, R) \subset \R^n$ be the ball of radius $R$, we assume the following conditions for $h$: 
\begin{itemize}
 \item $h\in \cA_{r_0,s_0}(B(0, R))$.
 \item $h$ is $l, m$-quasi-convex on $U_{r_0}B(0,R)$, namely, for all $I \in U_{r_0}B(0, R)$,   $\nabla h(I) \ne 0$, and 
\[
	\nabla^2 h(I) v \cdot v \ge m\|v\|^2, \quad \text{ if }
	  |v \cdot \nabla h(I)| \le l \|v\|^{{1}}.  
\]
\item $|\nabla h(I)|, |\nabla^2 h(I)| \le M$ for all $I \in U_{r_0}B(0, R)$.   
\end{itemize}
Let us denote $\cM = (R, r_0, s_0, l, m, M)$ the ensemble of parameters, and we reserve the notation $C = C(\cM)$ or $C_k = C_k(\cM)$ for unspecified positive constants depending only on $\cM$. The following is our main theorem. 

\begin{thm}\label{thm:main}
Under the standard assumptions on $h$, for any $0< \delta < \frac{1}{2n+4}$, there is $C = C(\cM) >1$, and $\epsilon_0 = \epsilon_0(\delta, \cM) >0$ such that if 
\[
	|f|_{D, r_0, s_0} < \epsilon \le \epsilon_0,
\]
then for all solutions $(\theta, I)(t)$ of $H$ with $I(0) \in B = B(0, R/2)$, we have 
\[
	|I(t) - I(0)| < C \epsilon^{\delta}, \text{ for }
	|t| < C^{-1} \exp\left( C^{-1} \epsilon^{-\frac{1-8\delta}{2n-4}} \right).  
\]
\end{thm}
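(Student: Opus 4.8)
The plan is to split the phase space into a \emph{resonant} region $\mathcal{R}$, consisting of the $\epsilon^\delta$-neighborhood (in action) of all resonances $k \cdot \nabla h(I) = 0$ with $0 < |k| \le K$ for a threshold $K \sim |\log\epsilon|$, and its complement $\mathcal{N} = B(0,R) \setminus \mathcal{R}$, the \emph{non-resonant} region where $h$ is $(K, \alpha)$-nonresonant for a suitable $\alpha = \alpha(\epsilon)$. Corresponding to this split, the theorem reduces to two stability statements, which by the introduction's roadmap are the content of Sections~\ref{sec:non-res} and~\ref{sec:stab-res}. For a trajectory $(\theta,I)(t)$ starting in $B(0,R/2)$, as long as it stays in $\mathcal{N}$ one applies the non-resonant estimate; the first time it enters $\mathcal{R}$ one switches to the resonant estimate; and one patches the time intervals together, using that each excursion displaces $I$ by at most $O(\epsilon^\delta)$ so that finitely many patches keep the trajectory inside $B(0,R)$ over the full exponential time scale. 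The global drift bound $|I(t)-I(0)| < C\epsilon^\delta$ then follows provided the \emph{number} of switches is subexponential in $\epsilon^{-a}$, which it is because the total time is exponential and each segment has length bounded below (this is the usual topological-confinement argument).

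First I would treat the non-resonant region. Here the key step is to construct, via one step of analytic averaging (a Birkhoff-type normal form as in \pos's and Lochak--Neishtadt's work, keeping Fourier modes $|k| \le K$), a symplectic change of coordinates on $V_{r,s}\mathcal{N}$ after which $H = h(I) + g(I) + f^\ast(\theta,I)$ with the remainder $|f^\ast| \le \epsilon \exp(-c K s_0)$ exponentially small. Choosing $K$ of order $|\log\epsilon|$ and optimizing makes $|f^\ast| \le \exp(-\epsilon^{-a'})$ for $a'$ close to the desired $a$; the quasi-convexity of $h + g$ (inherited from $h$ since $|g| \lesssim \epsilon$ is a small perturbation, using the quantitative convexity with constant $m$) together with the energy-conservation trick then confines $I$ to an $O(\sqrt{|f^\ast|})$-ball, which is far smaller than $\epsilon^\delta$, over the time $|f^\ast|^{-1}$. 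The point is that in the non-resonant region one does \emph{not} pay the $\frac{1}{2n}$ price of the generic Nekhoroshev estimate but only the price dictated by the order of resonance one needs to avoid, which here is pushed up to $|\log\epsilon|$, giving essentially the optimal exponent.

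Next, the resonant region. The strategy, following Bounemoura--Marco \cite{BM11}, is that a trajectory achieving fast diffusion in $\mathcal{R}$ must in fact linger near a \emph{double} resonance: away from double resonances one can, along the unique $1$-resonant line through a given point, again average out to exponentially small remainder and apply the one-dimensional quasi-convex confinement (stability by resonance with $a = b = \frac{1}{2(n-d)}$ for $d=1$), which already yields the claimed exponent $\frac{1}{2(n-2)} \geq \frac{1}{2(n-1)}$ with room to spare. Only inside the $\epsilon^\delta$-neighborhoods of double resonances $\{k_1\cdot\nabla h = k_2 \cdot \nabla h = 0\}$ does one lose more; but these neighborhoods have codimension-two geometry, so the trajectory, being confined transversally by the $1$-resonant analysis on their boundary, can only drift \emph{along} them, and the effective dimension of the unconfined directions drops from $n$ to $n-2$, producing exactly the exponent $\frac{1}{2(n-2)}$ and the stability radius $\epsilon^{(n-2)\sigma/4}$ after optimizing $\sigma$ against $\delta$. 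Assembling the two regions, with the geometric covering of resonances and the bookkeeping of constants, gives the stated $b = \delta$ (up to renaming) and $a = \frac{1-8\delta}{2n-4}$.

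The main obstacle I expect is the geometry and bookkeeping at the interface between regions — in particular, controlling how the $1$-resonant confinement on the boundary of a double-resonant neighborhood propagates to keep the trajectory from escaping \emph{radially} out of that neighborhood, so that one genuinely reduces to an $(n-2)$-dimensional problem rather than merely an $(n-1)$-dimensional one. Making the averaging transformations on overlapping neighborhoods of many resonances mutually compatible (so that switching charts costs only $O(\epsilon^\delta)$ each time and the total cost stays $o(\epsilon^\delta)$ times the number of switches) is the technical heart, and it is where the constraint $\delta < \frac{1}{2n+4}$ and the precise loss $1 - 8\delta$ in the exponent will emerge.
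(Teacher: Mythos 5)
Your high-level decomposition is correct — the paper does split $B(0,R)$ into a non-resonant region $\cD(\epsilon)$ (your $\mathcal{N}$) and a neighborhood of $K$-strong $1$-resonances $\cN(\epsilon)$ (your $\mathcal{R}$), and treats each with a separate stability estimate. But the way you glue the two regions is a genuine gap. You propose a patching argument: follow the orbit in one region, switch estimates when it crosses the boundary, and sum the $O(\epsilon^\delta)$ costs over "subexponentially many" switches. There is no mechanism in your argument bounding the number of switches; the orbit can re-enter and exit a region arbitrarily often within the time $T$, and even polynomially many switches would destroy the $O(\epsilon^\delta)$ global bound. Indeed, you acknowledge this as the "technical heart" but don't resolve it.

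The paper's Proof of Theorem~\ref{thm:main} sidesteps patching entirely with a dichotomy. Choosing $L = 12 s_0$ (so $N=2$) makes the non-resonant stability time in Proposition~\ref{prop:non-res-stab} of order $\exp(\epsilon^{-1/n})$, \emph{much longer} than the claimed time $T \sim \exp(\epsilon^{-(1-8\delta)/(2n-4)})$. Consequently: if the orbit ever visits $\cD(\epsilon)$ at some $t_* \in [0,T]$, then Proposition~\ref{prop:non-res-stab} applied at $t_*$ (forward and backward) already confines $I$ on the entire interval $[t_*-T, t_*+T] \supset [0,T]$ — no switching back is ever needed, because the confinement from a single visit outlasts the whole time window. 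The only remaining case is an orbit that stays in $\cN(\epsilon)$ for \emph{all} of $[0,T]$, to which Proposition~\ref{prop:1-res-stab} applies. This observation — that the non-resonant region is dramatically over-stable, precisely so that touching it even once ends the game — is the key structural idea you are missing. Your non-resonant analysis would also need to be recalibrated: you aim for a stability time just matching the resonant one, whereas the dichotomy requires it to dominate.

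On the resonant side your heuristic ("the trajectory must linger near a double resonance, effective dimension drops to $n-2$") captures the right theme but not the actual mechanism. The paper's Proposition~\ref{prop:1-res-stab} is a proof by contradiction: if $|I(t)-I(0)| > \epsilon^\delta$ while staying in $\cN(\epsilon)$, then a quantitative geometric lemma (Lemma~\ref{lem:cross-res}, using the Bounemoura--Marco map $\Psi_h$, energy conservation, and an elementary Dirichlet-type Lemma~\ref{lem:rational}) produces a time $t_*$ at which $\omega(I(t_*))$ lies within $\epsilon^{1/2-\delta}K^2$ of a concrete double resonance $R_{k_1,k_2}$ with $|k_1|\le K$, $|k_2|\lesssim \epsilon^{-\delta}$. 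One then uses Lemma~\ref{lem:Lb-bound} to control the volume and generators of the maximal lattice, and applies P\"oschel's resonant Nekhoroshev estimate (Theorem~\ref{thm:res-stability}) with $d=2$; the resulting confinement by $\epsilon^{(1-8\delta)/(2n-4)} < \epsilon^\delta$ contradicts the assumed drift. Your proposal neither locates the double resonance quantitatively nor explains how the bounds on $|k_1|, |k_2|$ and $|\Lambda|$ enter — and it is exactly these bounds that produce the $1-8\delta$ loss and the $\delta < \frac{1}{2n+4}$ threshold you correctly anticipated would arise somewhere.
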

\begin{rmk}
\eqref{eq:opt-exp} follows by taking $\sigma = \frac{8\delta}{2(n-2)}$. 
\end{rmk}

The theorem is proven by dividing the $I$-space into two regions: neighborhood of lower order $1$-resonance, and the complement. We produce a stability result on each region. 

Let $\Lambda \subset \Z^n$ be a submodule, the space of $\Lambda$ resonant frequencies is defined by 
\[
	R_\Lambda = \{ 
	\omega \in \R^n: \quad k \cdot \omega = 0
	\text{ for all } k \in \Lambda
	\}. 
\]
The associated resonance surface is 
\[
S_\Lambda = \{I \in \R^n: \quad \omega(I) \in R_\Lambda\}. 	
\]
We say that $\Lambda$ has rank $d$ if there is linearly independent $\{k_1, \cdots, k_d\} \subset \Z^n$ such that $\Lambda = \Span_\Z\{k_1, \cdots, k_d\}$.  In this case, we also write $R_\Lambda = R_{k_1, \cdots, k_d}$. $\Lambda$ is called maximal if it's not contained by a larger module of the same rank.   Following \pos, we say that $\Lambda$ is a $K$-module if is generated by $|k_i|\le K$, for all $i=1,\cdots,d$. 

Given a parameter $0 < \beta <1 $, we define
\begin{equation}
  \label{eq:para}
 	L = 12 s_0, \quad K(\epsilon) = - L \log \epsilon, \quad 
	r(\epsilon) = \beta^{-1} \epsilon^{\frac12}, \quad
	\alpha(\epsilon) = \beta^{-1} r(\epsilon) K(\epsilon), 
\end{equation}
and 
\[
	\cN(\epsilon) = 
	\{ 
	I \in B(0, R): \quad d(\omega(I), \bigcup_{0 < |k| \le K}R_k) < \alpha(\epsilon)
	\}, \quad 
	\cD(\epsilon) = B(0, R) \setminus \cN(\epsilon). 
\]

\begin{figure}[t]
	\centering
  \includegraphics[width=2.5in]{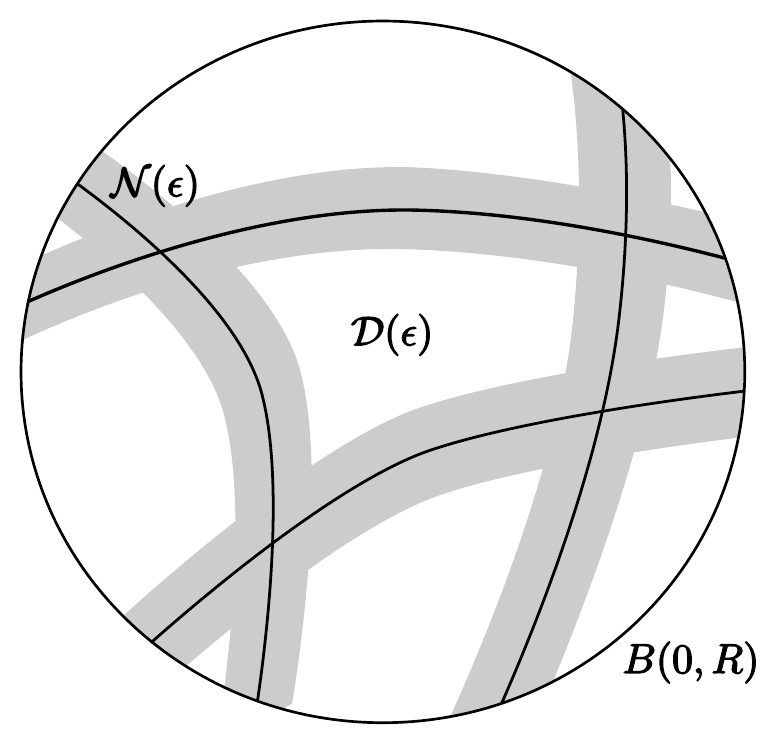}
  \caption{Resonant and non-resonant areas}
\end{figure}

Let $\Lambda \subset \Z^n$ be a maximal submodule, then a set  $D  \subset \R^n$ is called $\alpha, K$ non-resonant modulo $\Lambda$ if $|k \cdot \omega(I)| > \alpha$ for all $k \in \Z^n_K\setminus \Lambda$. $D$ is called $\alpha, K$ fully non-resonant if $\Lambda = \{0\}$. Then the set $\cN(\epsilon)$ is $\alpha(\epsilon)$ close to $K$-strong resonances, while the set $\cD(\epsilon)$ sufficiently non-resonant. 

The main observation is that orbits in the fully non-resonant region are much more stable than expected.

\begin{prop}\label{prop:non-res-stab}
Let $N = L/(6s_0)$. Under the our standing assumptions, there is $\epsilon_0 = \epsilon_0(\cM), \beta = \beta_0(\cM) >0$, $C_1 = C_1(\cM)>1$ such that for $\cD(\epsilon)$ defined using \eqref{eq:para}, if $|f|_{B(0, R), r_0, s_0} < \epsilon \le \epsilon_0$,  the following hold for $H = h + f$:

 {\it   Suppose $(\theta(t), I(t))$ be an orbit of $H$ starting with $I(0) \in \cD(\epsilon)$, then 
\[
	|I(t) - I(0)| \le C_1 \epsilon^{\frac12}, \quad 
	\text{ for }
	|t| \le C_1^{-1} \exp( C_1^{-1} \epsilon^{-\frac{N}{2n}}). 
\]
}
\end{prop}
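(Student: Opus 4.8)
The plan is to combine a one-step analytic normal form on the non-resonant region $\cD(\epsilon)$ with the standard quasi-convex Nekhoroshev estimate applied to the normalized Hamiltonian. First I would verify that on (a complex neighborhood of) $\cD(\epsilon)$ the frequency $\omega(I) = \nabla h(I)$ is $\alpha(\epsilon), K(\epsilon)$ fully non-resonant in the sense defined above: this is essentially the definition of $\cD(\epsilon)$, but one must propagate the non-resonance from the real ball to a complex strip of width $\sim r(\epsilon)$ in $I$ and width $\sim s_0/2$ in $\theta$, losing a controlled factor because $|\nabla^2 h| \le M$ and $K r \sim \epsilon^{1/2} |\log\epsilon|$ is small. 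With $\alpha(\epsilon) = \beta^{-1} r K$ and $r = \beta^{-1}\epsilon^{1/2}$, the classical small-divisor/averaging lemma (P\"oschel-style, or the iterative lemma of Lochak--Neishtadt) produces a symplectic change of coordinates $\Phi$, analytic on $V_{r/2, s_0/2}\cD(\epsilon)$, with $|\Phi - \mathrm{id}|$ small, transforming $H$ into
\[
	H\circ\Phi = h(I) + g(I) + f_*(\theta,I), \qquad
	|f_*| \le \epsilon\, e^{-K s_0/6} = \epsilon \cdot \epsilon^{N},
\]
where $|g|, |\partial g|$ are $O(\epsilon)$ and I have used $K = -L\log\epsilon = -12 s_0 \log\epsilon$ together with $N = L/(6s_0) = 2$ so that the exponential gain $e^{-Ks_0/6} = \epsilon^{2 s_0 \cdot L/(6 s_0)}$... more carefully, $Ks_0/6 = -2 s_0^2 \log\epsilon \cdot \tfrac{1}{?}$; the point is only that the choice of $L$ in \eqref{eq:para} is rigged precisely so the remainder is $\epsilon^{1+N}$ with $N = L/(6s_0)$, and I would carry the bookkeeping of constants to confirm this.

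Next I would apply the quasi-convex Nekhoroshev theorem to $\widetilde H = \tilde h + f_*$ where $\tilde h = h + g$. Since $g = O(\epsilon)$ with $O(\epsilon)$ derivatives, $\tilde h$ is still $l', m'$-quasi-convex with constants close to $l, m$, and its frequency map is a small perturbation of $\omega$, so $\cD(\epsilon)$ remains non-resonant for $\tilde h$ up to halving constants. The standard convex Nekhoroshev estimate with perturbation size $\tilde\epsilon := |f_*| \le \epsilon^{1+N}$ gives stability
\[
	|I(t) - I(0)| \le C \tilde\epsilon^{1/(2n)}, \qquad
	|t| \le C^{-1}\exp\!\left( C^{-1}\tilde\epsilon^{-1/(2n)} \right);
\]
substituting $\tilde\epsilon = \epsilon^{1+N}$ yields time scale $\exp(C^{-1}\epsilon^{-(1+N)/(2n)})$, which is at least $\exp(C^{-1}\epsilon^{-N/(2n)})$ as claimed, and displacement $\le C\epsilon^{(1+N)/(2n)} \le C\epsilon^{1/2}$ for $\epsilon$ small once $N$ is fixed (indeed $(1+N)/(2n) \ge 1/2$ can be arranged, or one simply notes $\epsilon^{(1+N)/(2n)} \le \epsilon^{1/2}$ fails in general — so instead I bound the displacement by $C\epsilon^{1/2}$ directly from the fact that the non-resonant region has ``thickness'' forcing the orbit to stay in a $r(\epsilon) = \beta^{-1}\epsilon^{1/2}$-neighborhood, combining the normal form drift $O(\epsilon) \le \epsilon^{1/2}$ with the Nekhoroshev confinement). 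Finally I translate back through $\Phi$: since $|\Phi-\mathrm{id}| = O(r) = O(\epsilon^{1/2})$, the estimate for $I(t)$ in original coordinates changes only by $O(\epsilon^{1/2})$, giving the stated bound with a possibly larger $C_1$.

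The main obstacle I anticipate is not the Nekhoroshev input (which is black-box) but the geometry of $\cD(\epsilon)$: I must ensure that an orbit starting in $\cD(\epsilon)$ either stays in the domain where the normal form $\Phi$ is valid for the entire Nekhoroshev time, or exits only after displacement $\ge C_1\epsilon^{1/2}$ — but exiting $\cD(\epsilon)$ requires $\omega(I(t))$ to come within $\alpha(\epsilon)$ of a $K$-resonance, and since $\alpha(\epsilon) = \beta^{-1} r K \gg r = $ (the confinement scale) this is a genuine issue unless the normal form is constructed on a slightly enlarged region and one runs a standard continuation/bootstrap argument: define the normal form on $\cD'(\epsilon) \supset \cD(\epsilon)$ using a threshold $\alpha/2$, observe the Nekhoroshev confinement keeps $I(t)$ well inside $\cD'(\epsilon)$ as long as $I(t) \in \cD(\epsilon)$, hence the confinement is self-consistent on the full time interval. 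Handling this buffer correctly — and checking that shrinking $\alpha \to \alpha/2$ does not spoil the remainder estimate $\epsilon^{1+N}$ — is where the care is needed; everything else is a routine application of the cited results of Lochak--Neishtadt and P\"oschel.
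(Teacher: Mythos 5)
Your high-level plan is exactly the paper's: build a $K,\alpha$-fully-non-resonant normal form on a complex neighborhood of $\cD(\epsilon)$ of width $r(\epsilon)\sim\beta^{-1}\epsilon^{1/2}$, using $L=12s_0$ so that the exponentially small remainder is $e^{-Ks_0/6}\epsilon = \epsilon^{1+N}$, then feed the normalized system $h_1 + f_1$ into the quasi-convex global Nekhoroshev theorem and pull back through $\Phi$. However, there is a genuine gap in the middle step, and the patch you propose is not correct.

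You write that the Nekhoroshev estimate gives displacement $\le C\tilde\epsilon^{1/(2n)}$ with $\tilde\epsilon = \epsilon^{1+N}$, correctly observe that this gives $\epsilon^{1/2}$ only if $(1+N)/(2n)\ge 1/2$ (which fails for $n\ge 4$ when $N=2$), and then fall back on a vague ``thickness of $\cD(\epsilon)$'' heuristic. That heuristic does not hold: $\cD(\epsilon)$ is a set of order-one size, not an $\epsilon^{1/2}$-neighborhood, and $r(\epsilon)$ is only the analyticity width for the normal form, not a confinement scale. The real fix is that you used the wrong form of the Nekhoroshev estimate. The paper's Theorem~\ref{thm:global-stab} (P\"oschel's Theorem~1) is scale-aware: on a domain of analyticity width $r$, with threshold $\bar\epsilon_0 = C_4^{-1}r^2$, the displacement is bounded by $C_4\, r\,(\tilde\epsilon/\bar\epsilon_0)^{1/(2n)}$ and the time by $\exp(C_4^{-1}(\bar\epsilon_0/\tilde\epsilon)^{1/(2n)})$. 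The paper takes $r_3 \sim \beta^{-1}\epsilon^{1/2}$, so $\bar\epsilon_0 \sim \beta^{-2}\epsilon$ and $\tilde\epsilon/\bar\epsilon_0 \sim \beta^2\epsilon^{N}$; hence the displacement is $\sim r_3\,\epsilon^{N/(2n)} \sim \epsilon^{1/2 + N/(2n)}$, which is $\le C\epsilon^{1/2}$ for every $n$, and the time is $\exp(C^{-1}\epsilon^{-N/(2n)})$ rather than the $\exp(C^{-1}\epsilon^{-(1+N)/(2n)})$ you wrote. This same correction dissolves your worry about exiting $\cD(\epsilon)$: the confinement $\epsilon^{1/2+N/(2n)}$ is much smaller than the normal form's radius of validity $r_1 = r/2$, so the pulled-back orbit $I'(t)$ stays well inside $U_{r_1}\cD(\epsilon)$ for the whole Nekhoroshev time without any extra buffer-and-bootstrap argument; one only needs $r_3 < r_1$ and the $|\Pi_I\Phi - I|\le\epsilon^{1/2}$ bound to close the loop, which is exactly what Corollary~\ref{cor:norm-sys} provides.
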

\begin{rmk}
 choosing $L = 12s_0$ implies $N = 2$, and the stability time is $C_1 \exp( C_1^{-1} \epsilon^{-\frac{1}{n}})$, which is much longer than what's claimed in Theorem~\ref{thm:main}. The reason is the stability time is completely determined by what's happening in the resonant region. 
\end{rmk}

\begin{prop}\label{prop:1-res-stab}
For $0 < \delta < \frac{1}{2n+4}$, there exists $C_2 = C_2(\cM)>1$ and $\epsilon_0 = \epsilon_0(\cM, \delta) >0$ such that for 
\begin{equation}
  \label{eq:sta-time}
	T = C_2^{-1} \exp( C_2 ^{-1}\epsilon^{-\frac{1-8\delta}{2n-4}}),   
\end{equation}
and $0 < \epsilon \le \epsilon_0$,  the following hold: 

Let $(\theta(t), I(t))$, $t \in [0, T]$ be an orbit of $H$ with $I(0)\in B(0, R/2)$ and  $I(t) \in \cN(\epsilon)$ for all $t \in [0,T]$. Then 
\[
	|I(t) - I(0)| \le C_2 \epsilon^{\delta} 
	\text{ for all } 
	t \in [0, T]. 
\]
\end{prop}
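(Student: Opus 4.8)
The plan is to follow the resonance-covering strategy of \cite{BM11}, adapted to the modified parameters in \eqref{eq:para}, and to exploit Lochak's ``stability by resonance'' together with the fact that the fastest diffusion must pass through a double resonance. First I would cover the resonant set $\cN(\epsilon)$ by neighborhoods of resonant surfaces $S_\Lambda$, organized by the rank $d = \mathrm{rank}\,\Lambda$ of a $K$-module $\Lambda$, with $K = K(\epsilon) = -L\log\epsilon$. For each such $\Lambda$ one obtains a block $U_\Lambda$ on which the frequency is $\alpha, K$ non-resonant modulo $\Lambda$; on $U_\Lambda$ one applies a resonant normal form (Lochak--Neishtadt / \pos\ type) to put $H = h + f$ into the form $h + g(\theta,I) + f_*$ where $g$ depends on $\theta$ only through the $d$ angles conjugate to $\Lambda$, and $|f_*|$ is exponentially small, $|f_*| \le \epsilon\, e^{-cK} = \epsilon^{1+cL}$. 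The key point, exactly as in Lochak's argument, is that near a rank-$d$ resonance the relevant confinement comes from the quasi-convexity restricted to the $(n-d)$-dimensional fast subspace, yielding local stability over a time $\exp(c\,\epsilon^{-1/(2(n-d))})$ with drift $\lesssim \epsilon^{1/2}$ in the fast directions; the slow (resonant) directions are confined by a separate geometric argument using that the orbit cannot escape the block $U_\Lambda$ without entering a neighborhood of a higher-rank resonance.

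Next I would set up the induction on rank, from $d = n$ (periodic orbits, the deepest resonances) down to $d = 1$. At rank $d$, an orbit either stays in the rank-$d$ block for the full time $T$ — in which case Lochak's local estimate gives drift $\le C\epsilon^{1/2} \le C\epsilon^\delta$ provided $T \le \exp(c\,\epsilon^{-1/(2(n-d))})$, which holds for $d \le n-2$ by the choice \eqref{eq:sta-time} since $\tfrac{1-8\delta}{2n-4} < \tfrac{1}{2(n-d)}$ — or it leaves and must thereby cross into a block of strictly higher rank, where we invoke the inductive hypothesis. The ranks $d = n-1$ and $d = n$ (double and higher resonances) are the exceptional case: there the local stability time from Lochak is only $\exp(c\,\epsilon^{-1/4})$ or worse, too short to span $T$. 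Here, following \cite{BM11}, the argument is that the $(n-1)$-resonant surfaces have codimension $n-1$, so their $\alpha$-neighborhoods in frequency space, pulled back by $\omega$, form a set whose connected components have diameter $\lesssim \alpha K \lesssim \epsilon^{1/2}|\log\epsilon|^2 \le \epsilon^\delta$ (using $\alpha(\epsilon) = \beta^{-1} r K$ and $r = \beta^{-1}\epsilon^{1/2}$); an orbit confined to $\cN(\epsilon)$ that reaches a double resonance is therefore trapped in a single small component for the whole time interval, so its total drift is bounded by the diameter. One must check the bookkeeping: the number of $K$-modules is polynomial in $K$, hence a power of $|\log\epsilon|$, so the union of all normal-form error terms and all neighborhood widths remains $\le C\epsilon^\delta$, and $\delta < \tfrac{1}{2n+4}$ is exactly what is needed for the geometric covering constants (the $d$-resonant neighborhoods of width $\sim \epsilon^{1/2 - \text{something}}$) to close up consistently with the normal-form domains.

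The main obstacle I anticipate is the geometry of the transition between blocks of consecutive ranks: one must show that an orbit leaving the rank-$d$ block $U_\Lambda$ (because it drifts in the slow directions up to the boundary) genuinely enters a rank-$(d+1)$ block \emph{before} the slow drift exceeds the target $C\epsilon^\delta$, and that the normal-form domains at the two ranks overlap enough for the estimates to be chained. This requires a careful choice of the nested scales — the width of the rank-$d$ neighborhood must dominate the rank-$(d+1)$ width by a controlled factor, while both stay below $\epsilon^\delta$ — and it is precisely this balancing that forces the constraint $\delta < \tfrac{1}{2n+4}$ and the exponent $\tfrac{1-8\delta}{2n-4}$ in \eqref{eq:sta-time}. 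A secondary technical point is ensuring the resonant normal form from Proposition~\ref{prop:non-res-stab}'s machinery (a single analytic normalization step on each block, with exponentially small remainder) is valid uniformly over all $O(\mathrm{poly}(|\log\epsilon|))$ blocks with a single $\epsilon_0$; this is handled by the standard analytic-smoothing lemmas once the loss of analyticity width per block is controlled by $s_0/N$ with $N$ as in Proposition~\ref{prop:non-res-stab}.
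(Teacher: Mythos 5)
Your proposal attempts a resonance-block covering argument inspired by \cite{BM11}, organized by an induction on the rank $d$ of the $K$-module. This is not the route the paper takes, and as sketched it has gaps that would need real work to close. The paper's proof is a direct argument by contradiction with no induction and no block decomposition: suppose the orbit confined to $\cN(\epsilon)$ drifts more than $\epsilon^\delta$. Then Lemma~\ref{lem:cross-res} — the key geometric lemma — produces a time $t_*$ at which $\omega(I(t_*))$ lies within $C\beta^{-2}\epsilon^{1/2-\delta}K^2$ of a genuine double resonance $R_{k_1,k_2}$, with quantitative bounds $|k_1|\le K$ and $|k_2|\lesssim\epsilon^{-\delta}$. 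This is proved using energy conservation (which forces the drift of $I$ to translate into a drift of the \emph{normalized} frequency $\omega/|\omega|$) and a rational-approximation lemma (Lemma~\ref{lem:rational}) to find a new resonance $k_2$ linearly independent from the $k_1$ with $|k_1|\le K$ defining $\cN(\epsilon)$. Then \pos's stability theorem at a $d$-resonance (Theorem~\ref{thm:res-stability}, applied with $d = 2$) bounds $|I(t)-I(t_*)|\le C\epsilon^{(1-8\delta)/(2n-4)} < \epsilon^\delta$ for $|t - t_*|\le T$, contradicting the assumed drift. Your proposal gestures at ``the fastest diffusion must pass through a double resonance'' but never produces the second resonance $k_2$ with the required bound on its order nor the quantitative distance estimate; this is precisely the new content of the paper and is missing from your argument.

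Two concrete errors in your sketch. First, you claim that at rank $d = n-1$ or $n$ ``the local stability time from Lochak is only $\exp(c\,\epsilon^{-1/4})$ or worse''; this is backwards. The Lochak/\pos\ exponent near a rank-$d$ resonance is $\tfrac{1}{2(n-d)}$, which \emph{increases} with $d$, so higher-rank blocks are \emph{more} stable in time, not less — the bottleneck is $d = 2$. Second, you argue that the $(n-1)$-resonance neighborhoods pulled back to action space have connected components of diameter $\lesssim \alpha K$; this conflates the width of a resonance tube (indeed small, $\sim\alpha$) with its extent along the resonance surface (order one). These neighborhoods are thin but long tubes, not small balls, and no confinement argument based on ``small connected components'' is used in the paper or in \cite{BM11}. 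Finally, your concern about iterating the normal form across $\mathrm{poly}(|\log\epsilon|)$ blocks and controlling the cumulative loss of analyticity is moot in the paper's proof: Proposition~\ref{prop:1-res-stab} invokes Theorem~\ref{thm:res-stability} as a black box at the single double resonance found in Lemma~\ref{lem:cross-res}, and no normal-form iteration or block chaining occurs.
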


\begin{proof}[Proof of Theorem~\ref{thm:main}]
Let $T$ be as in \eqref{eq:sta-time}, and assume that $\epsilon_0$ is small enough so that 
\[
	T \le C_1^{-1} \exp(C_1^{-1} \epsilon^{-\frac{N}{2n}}). 
\]
Consider any orbit $(\theta, I)(t)$, $t \in [0, T]$ such that $I(0) \in B(0, R/2)$. If there is $t_* \in [0, T]$ such that $I(t_*)\in \cD(\epsilon)$, then by Proposition~\ref{prop:non-res-stab}, 
\[
	|I(t) - I(t_*)| \le C \epsilon^{\frac12}, \quad t \in [t_*-T, t_* +T] \supset [0,T]. 
\]
Alternatively, $I(t)\in B(0, R/2)\cap \cN(\epsilon)$ for all of $[0, T]$, then Proposition~\ref{prop:1-res-stab} applies, and the theorem follows. 
\end{proof}

\section{Stability in the non-resonant region}
\label{sec:non-res}

Let $\Lambda \subset \Z^n$ be a maximal submodule, define the projection operator $T_K \varphi$ for $\varphi(\theta, I) = \sum_{k \in \Z^n} \varphi_k(I) e^{(k \cdot \theta)i}$ as follows: 
\[
	T_K \varphi = \sum_{|k| \le K}\varphi_k(I) e^{(k \cdot \theta)i}, \quad P_\Lambda \varphi = \sum_{k \in \Lambda}\varphi_k(I) e^{(k \cdot \theta)i}. 
\]
The function $\varphi$ is called resonant modulo $\Lambda$ if $P_\Lambda \varphi = \varphi$. 

We have the following resonant normal form lemma: 
\begin{lem}[\cite{Pos93}, Normal Form Lemma, page 192]\label{lem:res-normal-form}
Suppose $D \subset B(0,R)$ is $\alpha, K$-nonresonant modulo $\Lambda$, and $h$ satisfies the standing assumptions. There is $C_3 = C_3(\cM) >1$ such that, if $0 < r \le  r_0$, $0 < s \le s_0$ and $f \in \cA_{r, s}D$ satisfies
\[
  \|f\|_{D, r,s} \le \epsilon \le C_3^{-1} \frac{\alpha r}{K}, \quad
  r \le C_3^{-1} \frac{\alpha}{K},
\]
and $Ks \ge 6$, then there exists a real analytic coordinate change $\Phi: V_{r_1, s_1}D \to V_{r,s}D$ with $r_1 = r/2$, $s_1 = s/6$ such that $H \circ \Phi = h + g_1 + f_1$ with  
\[
	\|g_1 - g_0\|_{D, r_1, s_1} \le C_3 \frac{K}{\alpha r} \epsilon^2, \quad \|f_1\|_{D, r_1, s_1} \le e^{-Ks/6}\epsilon, 
\] 
where $g_0 = P_\Lambda T_K f$ and $P_\Lambda g = g$. Moreover, $\|\Pi_I \Phi - I\| \le C_3 \frac{K}{\alpha} \epsilon$ uniformly on $V_{r_1, s_1}D$, where $\Pi_I$ denote the projection $(\varphi, I)\mapsto I$. 
\end{lem}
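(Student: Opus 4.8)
The statement is the standard resonant normal form obtained by averaging along Lie flows, so the plan is to run that iteration carefully. The only feature of $h$ that enters is that $\omega=\nabla h$ is analytic with $|\nabla^2 h|\le M$ (used to complexify the small divisors); the quasi-convexity and $\nabla h\ne 0$ are not needed here. Throughout I would use the usual sup-norm Cauchy estimates: $\|\{u,v\}\|_{D,r',s'}\le C(n)(r-r')^{-1}(s-s')^{-1}\|u\|_{D,r,s}\|v\|_{D,r,s}$, derivative bounds with a comparable loss of domain, and the ultraviolet tail estimate $\|\varphi-T_K\varphi\|_{D,r,s'}\le C(n)(s-s')^{-n}e^{-K(s-s')}\|\varphi\|_{D,r,s}$ for $s'<s$, which already makes the part of $f$ beyond Fourier order $K$ exponentially small once a definite fraction of $s$ is sacrificed.

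The core is one averaging step. Suppose $H_\#=h+g_\#+f_\#$ with $g_\#$ resonant modulo $\Lambda$ and $\|f_\#\|_{D,\rho,\sigma}\le\eta$. Write $T_K f_\#=P_\Lambda T_K f_\#+(\mathrm{Id}-P_\Lambda)T_K f_\#$ and solve the homological equation $\{h,\chi\}+(\mathrm{Id}-P_\Lambda)T_K f_\#=0$, whose solution has Fourier coefficients $(f_\#)_k(I)/(i\,k\cdot\omega(I))$ over $\{|k|\le K,\ k\notin\Lambda\}$. The hypothesis that $D$ is $\alpha,K$-nonresonant modulo $\Lambda$ gives $|k\cdot\omega(I)|>\alpha$ on $D$ for these $k$, and $\rho\le r\le C_3^{-1}\alpha/K$ together with $|\nabla^2 h|\le M$ propagates this to $|k\cdot\omega(I)|\ge\alpha/2$ on the complex polydisc $V_\rho D$; hence $\chi$ is analytic with $\|\chi\|_{D,\rho,\sigma}\le C\alpha^{-1}\eta$ and $\|\partial_\theta\chi\|\le CK\alpha^{-1}\eta$. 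When $\eta$ is small compared with $\alpha K^{-1}(\rho-\rho')(\sigma-\sigma')$, the time-one map $\Phi^1_\chi$ of the Hamiltonian flow of $\chi$ sends $V_{\rho',\sigma'}D$ into $V_{\rho,\sigma}D$, moves the $I$-component by at most $CK\alpha^{-1}\eta$, and the Lie/Taylor expansion of $H_\#\circ\Phi^1_\chi$ combined with the homological equation yields
\[
	H_\#\circ\Phi^1_\chi=h+\big(g_\#+P_\Lambda T_K f_\#\big)+f_\#',
\]
where $f_\#'$ collects the ultraviolet tail $(\mathrm{Id}-T_K)f_\#$ transported along the flow and the multiple-bracket remainder of the Lie series. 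Routine bracket estimates bound the latter by $CK(\alpha\,(\rho-\rho')(\sigma-\sigma'))^{-1}\eta^2$ and the former by $Ce^{-K(\sigma-\sigma')}\eta$; the updated normal form $g_\#+P_\Lambda T_K f_\#$ is again resonant modulo $\Lambda$ and differs from $g_\#$ by at most $\eta$.

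I would then iterate, starting from $g^{(0)}=0$, $f^{(0)}=f$, on a nested sequence of domains with $r_j\searrow r/2$ and $s_j\searrow s/6$, distributing the available $r/2$ of radius and $5s/6$ of analyticity width among the steps. The whole arithmetic of the constants ($r_1=r/2$, $s_1=s/6$, the gain $e^{-Ks/6}$, and the requirement $Ks\ge 6$) is there precisely to reconcile two opposing needs: the ultraviolet tails, evaluated on the final width $s/6$, must sum to something $\le e^{-Ks/6}\epsilon$, which forces most of the width to be spent early; while the quadratic bracket terms, fed by the smallness $\epsilon\le C_3^{-1}\alpha r/K$, must contract fast enough (quadratically, modulo the polynomial domain factors) that their resonant parts accumulate in $g$ only up to $O(K\epsilon^2/(\alpha r))$ and their non-resonant parts are eliminated at subsequent steps. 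One also checks along the way that the composite $\Phi=\Phi^1_{\chi_0}\circ\Phi^1_{\chi_1}\circ\cdots$ converges (the $\Phi^1_{\chi_j}$ tend to the identity fast) with $\|\Pi_I\Phi-I\|\le\sum_j\|\partial_\theta\chi_j\|\le CK\alpha^{-1}\epsilon$, and that the Fourier support generated along the iteration does not spoil the divisor bounds, i.e.\ the modes that actually have to be divided by $k\cdot\omega$ remain of order $\le K$ and outside $\Lambda$.

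The main obstacle is exactly this last point of bookkeeping: organizing the domain budget, the number of steps, and the control of the growing Fourier support so that the three estimates in the statement come out with these sharp exponents rather than weaker ones. Once the averaging step above is set up, the rest is a (delicate but mechanical) induction; this is the content of P\"oschel's Normal Form Lemma, and I would use his estimates in the form stated.
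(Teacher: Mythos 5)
The paper does not prove this lemma at all: it is quoted verbatim from P\"oschel's 1993 paper (``Normal Form Lemma'', page 192) and used as a black box. Your sketch is a faithful reconstruction of P\"oschel's Lie-series averaging argument, and the ingredients you single out are the right ones: solving the homological equation $\{h,\chi\}=(\mathrm{Id}-P_\Lambda)T_K f_\#$ with small divisors controlled by $\alpha,K$-nonresonance, propagating that control to the complex neighborhood via $|\nabla^2 h|\le M$ and $r\le C_3^{-1}\alpha/K$, iterating over a nested domain schedule that budgets $r/2$ in radius and $5s/6$ in width, and observing that quasi-convexity plays no role (it only enters the confinement steps downstream). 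One small terminological imprecision: in P\"oschel's scheme the contraction per step is geometric (by a fixed factor $<1$, over $T\sim Ks$ steps) rather than quadratic; the $\eta^2$ appears in the one-step bracket estimate, but once renormalized by the per-step domain loss the iteration is linear, which is exactly what produces $e^{-Ks/6}$ after $O(Ks)$ steps with a fixed Fourier cutoff $K$. This does not affect the correctness of the sketch, only its description; the hard bookkeeping you defer is precisely what P\"oschel carries out, and it is legitimate (and what the authors do) to simply invoke his statement.
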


We apply Lemma~\ref{lem:res-normal-form} to the fully non-resonant case $\Lambda = \{0\}$, then $g_0, g$ depends only on $I$.

\begin{cor}\label{cor:norm-sys}
Assume the standing assumptions for $h$, and let $\alpha(\epsilon), r(\epsilon), K(\epsilon)$ be chosen as in \eqref{eq:para}. 

 Write $r_1(\epsilon) = r(\epsilon)/2$, $r_2(\epsilon) = r(\epsilon)/4$ and $s_1 = s_0 /6$. Then there exists $\epsilon_0 = \epsilon_0(\cM), \beta_0(\cM)$such that if $\epsilon < \epsilon_0$ and $\beta < \beta_0$,  there exists 
\[
	\Phi: V_{r_1(\epsilon), s_1} \cD(\epsilon) \to V_{r(\epsilon), s_0} \cD(\epsilon), 
\]
such that $H \circ \Phi = h_1 + f_1$, with
\begin{enumerate}
 \item $h_1$ is $l/2, m/2$-quasi-convex on $U_{r_2(\epsilon)}\cD(\epsilon)$. 
 \item $|\nabla h_1(I)|, |\nabla^2 h_1(I)| \le 2M$ for all $I \in U_{r_2(\epsilon)}\cD(\epsilon)$. 
 \item $|f_1|_{\cD(\epsilon), r_1(\epsilon), s_1} \le \epsilon^{1+N}$. 
 \item $|\Pi_I \Phi - I| \le \epsilon^{\frac12}$. 
\end{enumerate}
\end{cor}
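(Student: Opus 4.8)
The plan is to verify that the hypotheses of Lemma~\ref{lem:res-normal-form} are met with the specific choices in \eqref{eq:para}, applied to the fully non-resonant module $\Lambda = \{0\}$ on the domain $D = \cD(\epsilon)$, and then to read off the four conclusions from the conclusions of the lemma. First I would check non-resonance: by construction $\cD(\epsilon) = B(0,R)\setminus\cN(\epsilon)$ consists exactly of those $I$ with $d(\omega(I),\bigcup_{0<|k|\le K}R_k)\ge \alpha(\epsilon)$, and since $\omega=\nabla h$ has $|\nabla^2 h|\le M$, a distance bound on $\omega(I)$ from each resonant hyperplane $R_k$ translates (up to a factor depending on $|k|\le K$ and $M$) into the lower bound $|k\cdot\omega(I)|>\alpha$ for all $0<|k|\le K$; this is the assertion, already noted in the text, that $\cD(\epsilon)$ is $\alpha,K$ fully non-resonant, so one only needs to track the constants. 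Then I would check the smallness conditions of the lemma with $r = r(\epsilon) = \beta^{-1}\epsilon^{1/2}$, $s = s_0$, $\alpha = \alpha(\epsilon) = \beta^{-1} r(\epsilon) K(\epsilon)$, $K = K(\epsilon) = -L\log\epsilon$: we need $\epsilon \le C_3^{-1}\alpha r/K$, $r\le C_3^{-1}\alpha/K$, and $Ks_0\ge 6$. Note $\alpha r/K = \beta^{-1} r^2 = \beta^{-3}\epsilon$, so $\epsilon\le C_3^{-1}\beta^{-3}\epsilon$ holds as soon as $\beta<\beta_0(\cM)$ is small enough (indeed $\beta^{-3}\ge C_3$); similarly $\alpha/K = \beta^{-1} r = \beta^{-2}\epsilon^{1/2}$, so $r = \beta^{-1}\epsilon^{1/2}\le C_3^{-1}\beta^{-2}\epsilon^{1/2}$ again holds for $\beta$ small; and $Ks_0 = -Ls_0\log\epsilon = -12 s_0^2\log\epsilon\ge 6$ once $\epsilon\le\epsilon_0(\cM)$. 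So the lemma applies and produces $\Phi: V_{r/2, s_0/6}\cD(\epsilon)\to V_{r, s_0}\cD(\epsilon)$ with $H\circ\Phi = h + g_1 + f_1$, and since $\Lambda=\{0\}$ both $g_0 = P_\Lambda T_K f$ and $g_1$ depend on $I$ only; set $h_1 := h + g_1$, $r_1 = r/2$, $s_1 = s_0/6$.

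Next I would derive the four stated properties. For (3): $\|f_1\|\le e^{-Ks_0/6}\epsilon = e^{-(-Ls_0\log\epsilon)/6}\epsilon = \epsilon^{Ls_0/6}\epsilon = \epsilon^{1 + 2s_0^2}$; with $L = 12 s_0$ one has $Ls_0/6 = 2s_0^2$, and $N = L/(6 s_0) = 2$, so $Ls_0/6 = N s_0^2$ — here I should be careful that the definition gives the exponent as $Ks_0/6$, and $Ks_0/6 = -\tfrac{L s_0}{6}\log\epsilon$, hence $e^{-Ks_0/6} = \epsilon^{L s_0/6} = \epsilon^{N s_0^2}$; to get the clean bound $\epsilon^{1+N}$ as claimed one needs $N s_0^2 \ge N$, i.e. $s_0\ge 1$, or else one absorbs the discrepancy into the definition of $L$ — in any case, by choosing $\epsilon_0$ small the bound $\|f_1\|\le\epsilon^{1+N}$ follows from $Ks_0/6\to\infty$ faster than any fixed power minus one, so this is a matter of bookkeeping with $\epsilon_0(\cM)$. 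For (4): the lemma gives $\|\Pi_I\Phi - I\|\le C_3 \tfrac{K}{\alpha}\epsilon = C_3 \beta^{-1}\tfrac{\epsilon}{r} = C_3\beta^{-2}\epsilon^{1/2}\cdot\beta = C_3 \beta^{-1}\beta\,\epsilon^{1/2}$ — recomputing: $K/\alpha = \beta/(r) = \beta\cdot\beta\epsilon^{-1/2} = \beta^2\epsilon^{-1/2}$, wait, $\alpha = \beta^{-1} r K$ so $K/\alpha = \beta/r = \beta\cdot\beta\epsilon^{-1/2}$? No: $1/r = \beta\epsilon^{-1/2}$, so $K/\alpha = \beta/r = \beta^2\epsilon^{-1/2}$, giving $\|\Pi_I\Phi-I\|\le C_3\beta^2\epsilon^{-1/2}\epsilon = C_3\beta^2\epsilon^{1/2}\le\epsilon^{1/2}$ for $\beta$ small. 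For (1) and (2): $g_1$ depends only on $I$ and $\|g_1 - g_0\|\le C_3\tfrac{K}{\alpha r}\epsilon^2 = C_3\beta\epsilon$ on the slightly shrunk real domain $U_{r_2}\cD(\epsilon)$, $r_2 = r/4$; a Cauchy estimate on the $r/4$-neighborhood inside the $r/2$-domain bounds $\|\nabla^2(g_1 - g_0)\|$ by $C\|g_1-g_0\|/(r/4)^2 \le C\beta\epsilon/(r^2) = C\beta^3$, and similarly $\|\nabla g_0\|, \|\nabla^2 g_0\|$ are controlled by Cauchy estimates from $\|g_0\|\le\|f\|\le\epsilon$ on the $r/2$-domain, giving $\|\nabla^2 g_0\|\le C\epsilon/r^2 = C\beta^2$; hence $h_1 = h + g_1$ has $\nabla^2 h_1 = \nabla^2 h + O(\beta^2)$ in operator norm, which for $\beta<\beta_0(\cM)$ small preserves $l/2, m/2$-quasi-convexity (the perturbation of the quadratic form and of the gradient direction is smaller than the margins built into the definition) and keeps $|\nabla h_1|, |\nabla^2 h_1|\le 2M$.

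The main obstacle — really the only subtle point — is the quasi-convexity perturbation in (1): unlike ordinary convexity, $l,m$-quasi-convexity is a conditional positivity statement ($\nabla^2 h\, v\cdot v\ge m\|v\|^2$ only on the cone $|v\cdot\nabla h|\le l\|v\|$), so one must check that an $O(\beta)$ perturbation of $\nabla h$ (which tilts the cone) together with an $O(\beta^2)$ perturbation of $\nabla^2 h$ still yields $\ge \tfrac m2\|v\|^2$ on the new, slightly larger cone $|v\cdot\nabla h_1|\le \tfrac l2\|v\|$. This is a standard but slightly delicate continuity argument: if $v$ is in the $h_1$-cone it is in a slightly enlarged $h$-cone $|v\cdot\nabla h|\le (l/2 + C\beta)\|v\|$, and one needs $h$ to be $l', m'$-quasi-convex for $l'$ slightly above $l/2$ — which follows from the assumed $l,m$-quasi-convexity as long as $l/2 + C\beta\le l$, i.e. $\beta\le l/(2C)$ — and then $\nabla^2 h_1 v\cdot v\ge \nabla^2 h\, v\cdot v - C\beta^2\|v\|^2\ge (m - C\beta^2)\|v\|^2\ge \tfrac m2\|v\|^2$. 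All remaining estimates are routine applications of Cauchy's inequality on nested complex neighborhoods and choices of $\epsilon_0(\cM,\delta)$ — actually $\epsilon_0(\cM)$ here, since $\delta$ does not enter this corollary — and $\beta_0(\cM)$.
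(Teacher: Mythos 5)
Your proof follows the same route as the paper: verify the smallness hypotheses of Lemma~\ref{lem:res-normal-form} with $\Lambda=\{0\}$, $D=\cD(\epsilon)$, $r=r(\epsilon)$, $\alpha=\alpha(\epsilon)$, $K=K(\epsilon)$, $s=s_0$; apply the lemma to get $H\circ\Phi=h+g_1+f_1$ with $g_1,g_0$ depending only on $I$; set $h_1=h+g_1$ and bound $\nabla g_1,\nabla^2 g_1$ by Cauchy estimates on the shrunk domain; and pass the quasi-convexity from $h$ to $h_1$ by a continuity argument. A few arithmetic slips are worth flagging, though none affect the conclusion: you write $\|g_1-g_0\|\le C_3\beta\epsilon$ where the correct value is $C_3\frac{K}{\alpha r}\epsilon^2=C_3\frac{\beta}{r^2}\epsilon^2=C_3\beta^3\epsilon$; and you characterize the perturbation of $\nabla h$ as $O(\beta)$ when in fact the Cauchy estimate gives $\|\nabla g_1\|\le(r/4)^{-1}\|g_1\|=O(\beta\epsilon^{1/2})$, which is even smaller, so the cone-enlargement argument goes through a fortiori. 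The paper phrases the quasi-convexity step in the disjunction form (for each unit $v$, either $\nabla^2 h\,v\cdot v\ge m$ or $|\nabla h\cdot v|>l$), which dodges the explicit cone-tracking and is marginally cleaner, but your version is logically equivalent. Finally, you are right to be uneasy about item~(3): with $s=s_0$ and $K=-L\log\epsilon$ the lemma yields $\|f_1\|\le e^{-Ks_0/6}\epsilon=\epsilon^{1+Ls_0/6}=\epsilon^{1+Ns_0^2}$, which matches the stated $\epsilon^{1+N}$ only when $s_0\ge 1$; for $s_0<1$ one would need to replace $N$ by $Ls_0/6$ throughout, which does not change the structure of the argument but does alter the stated exponent in Proposition~\ref{prop:non-res-stab}. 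Your catch here is a genuine inconsistency in the paper's bookkeeping, not a gap in your own proof.
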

\begin{proof}
Let $D = \cD(\epsilon)$, $r = r(\epsilon)$, $\alpha = \alpha(\epsilon)$, $K = K(\epsilon)$, $\Lambda = \{0\}$. Then for  $\beta < C_3^{-1}$, 
\[
	C_3^{-1}\frac{\alpha r}{K} 
	= C_3^{-1} \beta^{-1} r^2 = C_3^{-1} \beta^{-3} \epsilon > \epsilon, \quad 
	C_3^{-1}\frac{\alpha}{K} = C_3^{-1} \beta^{-1} r > r.
\]
Therefore, Lemma~\ref{lem:res-normal-form} applies. It follows that $H \circ \Phi = h + g_1 + f_1$, with 
\[
	\|g_1 - g_0\|_{\cD, r_1, s_1} \le C_3 \beta r^{-2}\epsilon^2 =  C_3 \beta^3 \epsilon < \epsilon, \quad 
	\|f_1\|_{\cD, r_1, s_1} \le e^{-Ks/6}\epsilon = \epsilon^{N+1}. 
\]
Since $\Lambda$ is the trivial module, $g_1, g_0$ depends only on $I$, and $\|g_0\|_{\cD, r_1, s_1} \le \|f\|_{\cD, r, s_0} = \epsilon$.   Define 
\[
	h_1 = h + g_1, 
\]
using Cauchy estimates we have 
\[
	\|\nabla h_1 - \nabla h\|_{U_{r_2}\cD} \le (r_1/2)^{-1} \|g_1\|_{U_{r_1}\cD} \le 8 \beta \epsilon^{\frac12} < 8 \epsilon^{\frac12},  
\]
\[
	\|\nabla^2 h_1 - \nabla^2 h\|_{U_{r_2}\cD} \le (r_1/2)^{-2} \|g_1\|_{U_{r_1}\cD} \le 32 \beta^2.   
\]
Choose $\epsilon_0$, $\beta_0$  such that
\[
	8 \epsilon^{\frac12} < \min\{M, l/2\}, \quad 
	32 \beta^2 < \min\{m/2, M\}
\]
Then $\|\nabla h_1\|, \|\nabla^2 h_1\| \le 2M$ on $U_{r_2}\cD$. To prove quasi-convexity, note that one of the following holds for all $\|v\| =1$:
\[
	\|\nabla^2 h(I) v\cdot v\| \ge m,  \text{ or } \|\nabla h(I) \cdot v\| > l.
\]
Our estimates imply one of the following always hold:
\[
	\|\nabla^2 h_1(I) v \cdot v\|\ge m/2, \text{ or } \|\nabla h_1(I) \cdot v\| > l/2,
\]
implying  $l/2, m/2$-semi-concavity. 
\end{proof}

We then apply the following \emph{global} stability theorem, which we apply to the normal form system. It's important to note that $h_1$ does \emph{not} satisfy our standing assumption, and special care needs to paid to which parameters the constants depends on. 
\begin{thm}[\cite{Pos93}, Theorem 1] \label{thm:global-stab}
Suppose $H = h_1 + f_1 \in \cA_{r, s}D$, $h_1$ is $l,m-$quasi-convex and 
\[
\|\nabla^2 h_1(I)\|_{D, r,s} \le M	. 
\]
There is $C_4>1$ depending on $s, l,  m, M$ such that the following hold. For $r \le s$,  let $f_1 \in \cA_{r,s}(D)$ satisfy
\[
	\|f_1\|_{D, r, s} \le \epsilon \le \epsilon_0 = C_4^{-1} r^2.
\]
Then for every orbit of $H$ with $(\theta(0), I(0))\in \T^n \times D$, one has 
\[
	\|I(t) - I_0\| \le C_4 r \left( \frac{\epsilon}{\epsilon_0} \right)^{\frac{1}{2n}}, \quad \text{ for } 
	|t| \le C_4^{-1} \exp \left( C_4^{-1} \left( \frac{\epsilon_0}{\epsilon}  \right)^{\frac{1}{2n}} \right). 
\]
\end{thm}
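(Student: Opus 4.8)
The plan is to run the standard Nekhoroshev machinery specialized to the quasi-convex case, following \pos\ (and, by a different route using periodic orbits, Lochak--Neishtadt). Rescaling the action by $r$ makes the effective perturbation size $\epsilon/\epsilon_0 \sim \epsilon/r^2$ and is exactly what permits $C_4$ to depend only on $s,l,m,M$. One then covers the domain $D$ by \emph{resonant blocks}: for each rank $d\in\{0,\dots,n-1\}$ and each $K$-module $\Lambda$ of rank $d$, the block attached to $\Lambda$ consists of those $I$ for which $\omega(I)=\nabla h(I)$ is $\alpha_d$-resonant modulo $\Lambda$ but $\alpha_{d+1}$-nonresonant modulo every larger $K$-module, for a decreasing sequence of widths $\alpha_0>\dots>\alpha_{n-1}$ and a cutoff $K$ to be fixed by optimization; since $\omega$ is a diffeomorphism with distortion controlled by $l,m,M$, these blocks cover $D$. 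On the block of $\Lambda$ the Normal Form Lemma quoted above (with $\Lambda$ as the resonant module) produces a symplectic $\Phi$ with $H\circ\Phi=h+g+f_1$, where $g=P_\Lambda T_K f+O(\epsilon)$ is resonant modulo $\Lambda$ and $\|f_1\|\le e^{-Ks/6}\epsilon$, as soon as $\epsilon$ and $r$ are small relative to $\alpha_d r/K$ and $\alpha_d/K$.

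The confinement of an orbit inside one block is where quasi-convexity is decisive. Since $g$ is resonant modulo $\Lambda$, $\partial_\theta g$ lies in $\Lambda_\R$, the real span of $\Lambda$, so along an orbit the drift $v(t):=I(t)-I(0)$ satisfies $\dist(v(t),\Lambda_\R)\le|t|\,\|\partial_\theta f_1\|$, which is exponentially small over the relevant times. For the component of $v$ lying in $\Lambda_\R$, the defining property of the block places $\omega(I_0)$ within $\alpha_d$ of $R_\Lambda=\Lambda_\R^{\perp}$, so $|\nabla h(I_0)\cdot v|\le C\alpha_d\|v\|$ for $v\in\Lambda_\R$, the quasi-convexity inequality applies, and $h(I_0+v)-h(I_0)\ge\tfrac m2\|v\|^2-C\alpha_d\|v\|$. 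On the other hand conservation of $H$ together with $\|g\|,\|f_1\|=O(\epsilon)$ forces $|h(I(t))-h(I(0))|=O(\epsilon)$, whence $\tfrac m2\|v\|^2\le C(\epsilon+\alpha_d\|v\|)$ and therefore $\|v\|\le C(\sqrt\epsilon+\alpha_d)$ for as long as the orbit stays in the block.

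It remains to patch the local estimates and optimize. The widths are arranged so that the confinement radius $\sqrt\epsilon+\alpha_d$ at rank $d$ is smaller than the width of a suitably enlarged block of the same rank; a continuation (``nose extension'') argument then traps the orbit, which either stays in one enlarged rank-$d$ block — where the bound above holds until the slow drift $|t|\,e^{-Ks/6}\epsilon$ grows comparable to it — or migrates through a cascade of blocks of decreasing rank whose confinement radii telescope, an orbit escaping all resonant blocks being caught in the fully non-resonant block $\Lambda=\{0\}$ where only the exponentially slow drift acts. Taking $K\sim(\epsilon_0/\epsilon)^{1/2n}$ balances the geometric hierarchy of the $\alpha_d$, $d\le n-1$ (which is what is responsible for the exponent $1/2n$), against the remainder $e^{-Ks/6}\epsilon$, and yields $\|I(t)-I_0\|\le C_4 r(\epsilon/\epsilon_0)^{1/2n}$ for $|t|\le C_4^{-1}\exp\bigl(C_4^{-1}(\epsilon_0/\epsilon)^{1/2n}\bigr)$.

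I expect the main obstacle to be the geometry of the patching step: organizing the covering by resonant blocks of all ranks so that confinement in one block, upon exit, deposits the orbit into a controlled lower-rank block and the successive displacements add up rather than compound, while simultaneously keeping track that every constant depends only on $s,l,m,M$ — which is precisely what forces the action rescaling and a careful analysis of the highest-rank resonances.
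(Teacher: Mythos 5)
This statement is not proved in the paper: it is quoted verbatim from P\"oschel \cite{Pos93} (his Theorem~1) and invoked without proof, so there is no in-paper argument to compare against. Your sketch is nonetheless a fair outline of P\"oschel's actual proof: covering the action domain by resonance blocks indexed by $K$-modules of each rank, building a resonant normal form on each block via the Normal Form Lemma quoted in the paper, splitting the drift $v=I(t)-I(0)$ into a $\Lambda_\R$-component bounded by $C(\sqrt{\epsilon}+\alpha_d)$ through quasi-convexity plus energy conservation and a transverse component bounded by the exponentially small remainder $e^{-Ks/6}\epsilon$, and finally optimizing $K\sim(\epsilon_0/\epsilon)^{1/2n}$. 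You are also right that the assertion that $C_4$ depends only on $s,l,m,M$ --- with $r$ entering only through $\epsilon_0=C_4^{-1}r^2$ --- is the delicate bookkeeping point, and that it is traced through a rescaling of the actions; this is exactly what lets the present paper feed the shrunken domain $r_3\sim\epsilon^{1/2}$ from Corollary~\ref{cor:norm-sys} into this theorem. One genuine imprecision: your ``cascade through blocks of decreasing rank'' ending in $\Lambda=\{0\}$ is not how P\"oschel's proof closes. His geometric lemma calibrates the block widths $\alpha_d$ so that the confinement radius at rank $d$ is strictly smaller than the margin by which an enlarged rank-$d$ block surrounds the original one; the orbit is then trapped in a single extended block for the entire exponential time, and escape toward a less-resonant region is precisely what the block geometry excludes (the drift direction $\Lambda_\R$ maps, via the nondegenerate frequency map, transversally to $R_\Lambda$, and quasi-convexity caps that excursion before it can leave the extended block). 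So there is no telescoping over ranks; the patching is a one-shot trapping argument, and you correctly flag it as where the real work is.
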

\vspace{10pt}

\begin{proof}[Proof of Proposition~\ref{prop:non-res-stab}]
Let $r_3 = r_3(\epsilon) = r_2(\epsilon)/2$. 

Let $(\theta, I)(t)$ be an orbit of $H$ with $I(0) \in \cD(\epsilon)$, then $(\theta', I')(t) = \Phi^{-1}(\theta, I)(t)$ is an orbit of $H \circ \Phi$ as long as $I'(t) \in U_{r_1, s_1}\cD(\epsilon)$. Note that according to item 4 of Corollary~\ref{cor:norm-sys}, $|I'(0) - I(0)| < \epsilon^{\frac12} < r_3 = \beta^{-1} \epsilon^{\frac12} / 8$ , so $I'(0) \in U_{r_3}\cD$. 

Consider 
\[
	H \circ \Phi = h_1 + f_1, \quad D = U_{r_3}\cD(\epsilon), 
\]
then Theorem~\ref{thm:global-stab} applies with parameters $r_3, s_1$ since
\[
	\epsilon_1 = \epsilon^{N+1} \le \bar{\epsilon}_0 = C_4^{-1} r_3^2 = (64C_4)^{-1} r^2 = (64C_4) \beta^{-2} \epsilon. 
\]
Therefore 
\[
	\|I'(t) - I'(0)\| \le C_4 r_3 \left( \frac{\epsilon_1}{\bar{\epsilon}_0} \right)^{\frac{1}{2n}} \le  C_4 \beta^{-1} \epsilon^{\frac12} \left( \frac{\epsilon^{1+N}}{\epsilon} \right)^{\frac{1}{2n}} \le C_4 \beta^{-1} \epsilon^{\frac12 + \frac{N}{2n}},
\]
since $\bar\epsilon_0 > \epsilon$; for the time interval
\[
	|t| \le 
	C_4^{-1} \exp \left( C_4  \epsilon^{\frac{N}{2n}} \right) \le 
	C_4^{-1} \exp \left( C_4^{-1} \left( \frac{\bar{\epsilon}_0}{\epsilon_1} \right)^{\frac{1}{2n}} \right),
\]
which includes the time interval 
\[
	|t| \le C_4^{-1} \exp \left( C_4^{-1}  \epsilon^{-\frac{N}{2n}}\right). 
\]
Using $I'(t) \in U_{r_1, s_1}\cD(\epsilon)$, and $|I(t) - I'(t)| < \epsilon^{\frac12}$ we obtain our proposition. 
\end{proof}

\section{Stability near strong $1$-resonances}
\label{sec:stab-res}

Suppose $\Lambda \subset \Z^n$ is a maximal submodule, and let $k_1, \cdots, k_d \in \Z^n$ be linearly independent and generates $\Lambda$ over $\Z$. The volume $|\Lambda|$ of $\Lambda$ is defined as 
\[
	|\Lambda|^2 = \det \bmat{k_1^T \\ \vdots \\ k_d^T} 
	\bmat{k_1 & \cdots & k_d}. 
\]
This definition is independent of the basis $k_1, \cdots, k_d$. 
$\Lambda$ is called a $K$-lattice if $|k_1|, \cdots, |k_d| \le K$. 
\begin{thm}[\cite{Pos93}, Theorem 3]\label{thm:res-stability}
Suppose $h$ satisfies the standing assumption, and consider a $K_\Lambda-$lattice $\Lambda$ of dimension $d$. Then there exist $C_5 = C_5(\cM)>1$ such that if 
\[
	|f|_{B, r, s} \le  \epsilon \le \frac{\epsilon_\Lambda}{K_\Lambda^{2(n-d)}}, \quad 
	\epsilon_\Lambda = C_5^{-1} |\Lambda|^{-2},
\]
where $|\Lambda|$ is the volume of $\Lambda$. Then for every orbit $(\theta, I)(t)$ such that 
\[
	d(\omega(I(0)), R_\Lambda) < C_5^{-1}\sqrt{\epsilon}, 
\]
 one has 
\[
	\|I(t) - I_0\| \le C_5r\left( \frac{\epsilon}{\epsilon_\Lambda} \right)^{\frac{1}{2(n-d)}}, 
	\text{ for }
	|t| \le C_5^{-1} \exp \left( C_5^{-1}\left( \frac{\epsilon_\Lambda}{\epsilon} \right)^{\frac{1}{2(n-d)}} \right). 
\]
\end{thm}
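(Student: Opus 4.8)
This is P\"oschel's resonant stability estimate (\cite{Pos93}, Theorem 3). The plan is to run the fully non-resonant argument of Section~\ref{sec:non-res} relative to the module $\Lambda$, and then to confine the orbit in the remaining $d$ resonant directions using energy conservation and quasi-convexity. Throughout, $\Lambda_\R$ denotes the real span of $\Lambda$, and I replace $\Lambda$ by its saturation (whose volume is no larger) so that $\Z^n\cap\Lambda_\R=\Lambda$.

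First I would localise. By hypothesis $I(0)$ lies in the tube $D=\{I\in B:\ d(\omega(I),R_\Lambda)<C_5^{-1}\sqrt\epsilon\}$, and since $\omega$ is a local diffeomorphism (because $\nabla h\neq 0$ and $\|\nabla^2 h\|\le M$) this is an $O(\sqrt\epsilon)$ neighbourhood of the resonant surface $S_\Lambda$. I would apply Lemma~\ref{lem:res-normal-form} on $D$ with the module $\Lambda$, a radius $r'\sim\sqrt\epsilon$, and a cutoff $K$ to be optimised. The key input is that $D$ is $\alpha,K$-non-resonant modulo $\Lambda$ with a controlled $\alpha$: for $k\in\Z^n_K\setminus\Lambda$ the projection $\pi_{R_\Lambda}k$ is a non-zero vector of the rank-$(n-d)$ lattice $\pi_{R_\Lambda}\Z^n$, whose covolume equals $|\Lambda|^{-1}$, and a Dirichlet-type estimate for this lattice gives $|k\cdot\omega|\ge c|\Lambda|^{-1}K^{-(n-d-1)}$ on $S_\Lambda$ for $\omega$ not too close to a sub-resonance of order $\le K$; since $D$ is an $O(\sqrt\epsilon)$-tube about $S_\Lambda$ and $k\cdot\omega=\pi_{R_\Lambda}k\cdot\omega$ there, this yields $\alpha\sim|\Lambda|^{-1}K^{-(n-d-1)}$ once $\epsilon\le\epsilon_\Lambda K^{-2(n-d)}$, $\epsilon_\Lambda\sim|\Lambda|^{-2}$. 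The normal form requires $\epsilon\le c\,\alpha r'/K$, i.e. $K^{n-d}\le C|\Lambda|^{-1}\epsilon^{-1/2}$, so choosing $K\sim(\epsilon_\Lambda/\epsilon)^{1/(2(n-d))}$ (equivalently $K\sim|\Lambda|^{-1/(n-d)}\epsilon^{-1/(2(n-d))}$) saturates everything; this is exactly why the smallness threshold reads $\epsilon\le\epsilon_\Lambda/K_\Lambda^{2(n-d)}$ and the stability time comes out as $\exp(c(\epsilon_\Lambda/\epsilon)^{1/(2(n-d))})$. Lemma~\ref{lem:res-normal-form} then gives a symplectic $\Phi$ with $H\circ\Phi=h+g+f_1$, where $g=P_\Lambda T_K f$ is resonant modulo $\Lambda$, $\|g\|\le\epsilon$, $\|f_1\|\le e^{-Ks/6}\epsilon=\exp(-c(\epsilon_\Lambda/\epsilon)^{1/(2(n-d))})\,\epsilon$, and $\|\Pi_I\Phi-I\|\sim K\alpha^{-1}\epsilon\le C\sqrt\epsilon$.

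Next I would decompose $I=I^*+v_1+v_2$, with $v_1\in\Lambda_\R$ and $v_2\in R_\Lambda$ the orthogonal components of $I-I^*$, choosing $I^*$ so that $I^*+v_2(0)\in S_\Lambda$ and $\|v_1(0)\|\le C\sqrt\epsilon$. Because $g$ is resonant modulo $\Lambda$, $\partial_\theta g$ is a real combination of the elements of $\Lambda$, hence lies in $\Lambda_\R$; so along an orbit of $H\circ\Phi$ one has $\tfrac{d}{dt}(\pi_{R_\Lambda}I)=-\pi_{R_\Lambda}\partial_\theta f_1$, which is exponentially small, and thus $\|v_2(t)-v_2(0)\|\le |t|\,\|f_1\|$ stays far below $\sqrt\epsilon$ for all $t$ in the claimed interval. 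For $v_1$ I would use energy conservation: $h(I(t))=E-g-f_1+O(\|\nabla h\|\,\|v_2(t)-v_2(0)\|)=h(I(0))+O(\epsilon)$; Taylor-expanding $h$ in the direction $v_1$ about $I^*+v_2(0)\in S_\Lambda$, where $\nabla h=\omega$ is orthogonal to $\Lambda_\R$ so the linear term is $O(\sqrt\epsilon\,\|v_1\|)$, and invoking $l,m$-quasi-convexity (legitimate since every $v\in\Lambda_\R$ satisfies $|v\cdot\nabla h|\le C\sqrt\epsilon\,\|v\|\le l\|v\|$ on the tube for small $\epsilon$), I get $h(I^*+v_2(0)+v_1)\ge h(I^*+v_2(0))-C\sqrt\epsilon\,\|v_1\|+\tfrac m2\|v_1\|^2$. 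Comparing with the $O(\epsilon)$ energy slack gives $\tfrac m2\|v_1(t)\|^2\le C\epsilon+C\sqrt\epsilon\,\|v_1(t)\|$, hence $\|v_1(t)\|\le C(\cM)\sqrt\epsilon$ — a potential-well trapping valid for any $d$. Combining the three bounds ($v_1$ trapped at scale $\sqrt\epsilon$, $v_2$ essentially frozen, $\|\Pi_I\Phi-I\|\le C\sqrt\epsilon$) yields $\|I(t)-I(0)\|\le C\sqrt\epsilon\le C_5 r(\epsilon/\epsilon_\Lambda)^{1/(2(n-d))}$ for small $\epsilon$ (using $\tfrac1{2(n-d)}\le\tfrac12$), valid as long as the orbit remains in $D$; a standard bootstrap — the $\sqrt\epsilon$-excursion keeps $\omega(I(t))$ inside the (slightly wider) tube, and the exponentially small drift of $v_2$ does so for $|t|\le C_5^{-1}\exp(C_5^{-1}(\epsilon_\Lambda/\epsilon)^{1/(2(n-d))})$ — closes the argument.

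The main obstacle is precisely the choice of $K$ in the geometry-of-numbers step: the clean one-step scheme above needs $D$ to be $\alpha,K$-non-resonant modulo $\Lambda$ for $K$ as large as $(\epsilon_\Lambda/\epsilon)^{1/(2(n-d))}$, which fails whenever $\omega(I(0))$, though $\sqrt\epsilon$-close to $R_\Lambda$, happens to lie near a sub-resonance $R_{\Lambda'}$ with $\Lambda'\supsetneq\Lambda$ of moderate order — and the statement allows this. In that case one must restrict $K$ to order $K_\Lambda$, obtain only the coarser confinement recorded in the theorem, and then iterate the construction inside the $\Lambda'$-resonant zones; this nested-resonance bookkeeping is the heart of Nekhoroshev's theorem and is what forces the general excursion bound $r(\epsilon/\epsilon_\Lambda)^{1/(2(n-d))}$ rather than $\sqrt\epsilon$, together with the precise constants $\epsilon_\Lambda=C_5^{-1}|\Lambda|^{-2}$ and $K_\Lambda^{-2(n-d)}$. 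Carrying out this iteration — essentially P\"oschel's proof of Theorem 3 in \cite{Pos93}, with the unimodular coordinate changes adapted to $\Lambda$ having norms bounded in terms of a reduced basis of $\Lambda$ — is the delicate part; everything else is the non-resonant machinery of Section~\ref{sec:non-res} and Theorem~\ref{thm:global-stab}, applied relative to $\Lambda$.
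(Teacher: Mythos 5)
The paper does not prove this statement: Theorem~\ref{thm:res-stability} is quoted verbatim from P\"oschel \cite{Pos93} (his Theorem~3) and used as a black box, so there is no in-paper proof to compare against.

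Your sketch is a fair account of the \emph{ingredients} of P\"oschel's argument. You correctly identify the plan — work in the tube $\{d(\omega(I),R_\Lambda)<C\sqrt\epsilon\}$, apply the $\Lambda$-resonant normal form of Lemma~\ref{lem:res-normal-form}, split $I-I^*$ into its $\Lambda_\R$ and $R_\Lambda$ components, freeze the $R_\Lambda$ component because $\partial_\theta g\in\Lambda_\R$ when $g$ is resonant modulo $\Lambda$, and trap the $\Lambda_\R$ component in an energy potential well using quasi-convexity (legitimate since $|v\cdot\nabla h|\lesssim\sqrt\epsilon\|v\|<l\|v\|$ for $v\in\Lambda_\R$ on the tube). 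This is indeed the core of P\"oschel's Stability Lemma. Your covolume computation for $\pi_{R_\Lambda}\Z^n$ (covolume $|\Lambda|^{-1}$ when $\Lambda$ is saturated) is also correct and explains where the factor $|\Lambda|^{-2}$ in $\epsilon_\Lambda$ comes from.

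However, you yourself flag the genuine gap, and it is real: the one-step scheme requires the tube to be $\alpha,K$-non-resonant modulo $\Lambda$ for $K\sim(\epsilon_\Lambda/\epsilon)^{1/(2(n-d))}$, and this simply fails for the initial conditions the theorem allows — any $\omega(I(0))$ that is $\sqrt\epsilon$-close to $R_\Lambda$, including those near sub-resonances $R_{\Lambda'}$ with $\Lambda'\supsetneq\Lambda$ of modest order. Your claimed uniform Dirichlet bound $|k\cdot\omega|\gtrsim|\Lambda|^{-1}K^{-(n-d-1)}$ on $S_\Lambda$ does not hold without the "not too close to a sub-resonance" caveat you insert, and resolving that caveat is precisely P\"oschel's Geometric Lemma: a covering of the action space by nested resonance blocks, on each of which one runs a normal form adapted to the larger module, combined with the block-geometry to propagate stability downward. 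That covering argument is not a refinement of your sketch but its missing centerpiece; without it, the bound $r(\epsilon/\epsilon_\Lambda)^{1/(2(n-d))}$ (rather than your optimistic $\sqrt\epsilon$) and the threshold $\epsilon\le\epsilon_\Lambda/K_\Lambda^{2(n-d)}$ cannot be derived. A secondary loose end: your Taylor expansion should be carried out around $I^*+v_2(t)$ (which drifts, albeit exponentially slowly), not the fixed $I^*+v_2(0)$, and the resulting linear term needs the quantitative tube estimate to be bounded by $O(\sqrt\epsilon\|v_1\|)$ — your "$\nabla h$ orthogonal to $\Lambda_\R$ \emph{so} the linear term is $O(\sqrt\epsilon\|v_1\|)$" conflates the exact vanishing at a point of $S_\Lambda$ with the $O(\sqrt\epsilon)$ bound that actually holds uniformly on the tube.
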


The stability in the resonant area follows by two steps. First, by geometric consideration, we show that any orbit which drifts a large enough distance, in the neighborhood of strong $1$-resonance must be close to a $2$-resonance $R_{k_1, k_2}$ with estimates on $|k_1|, |k_2|$. We then apply Theorem~\ref{thm:res-stability}.

\begin{lem}\label{lem:cross-res}
Let $(\theta, I)(t)$ be an orbit of $H$ with $\|I(T) - I(0)\|> \epsilon^\delta$, and $I(t) \in \cN(\epsilon)$ for all $t \in [0, T]$. Then there exists $C_6 = C_6(\cM)$, $t_* \in [0,T]$ and 
\[
	k_1, k_2 \in \Z^n \setminus\{0\}, \quad |k_1| \le K, \quad 
	|k_2| \le C_6 \epsilon^{-\delta},
\]
such that 
\[
	d(\omega(I(t_*)), R_{k_1, k_2}) < C_6 \beta^{-2} \epsilon^{\frac12 - \delta} K^2(\epsilon). 
\]
\end{lem}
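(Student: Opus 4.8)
\textbf{Strategy.} The plan is to pass to frequency space, use energy conservation to confine the orbit to a fixed (quasi-convex) energy surface, and then run a dichotomy: either $\omega(I(t))$ comes $\alpha$-close to two $\Z$-independent $K$-resonances \emph{at the same time}, in which case the integer structure of the resonances forces the desired estimate, or $\omega(I(t))$ stays in the $\alpha$-neighborhood of a single $1$-resonance for \emph{all} of $[0,T]$, which will be shown to be incompatible with a drift of size $\epsilon^\delta$.

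\textbf{Step 1 (reduction to frequency space).} Since $H$ is conserved and $|f|\le\epsilon$, we have $|h(I(t))-h(I(0))|\le 2\epsilon$ for all $t$. Because $h$ is $l,m$-quasi-convex with $|\nabla^2h|\le M$, the map $I\mapsto\omega(I)=\nabla h(I)$ restricted to a level set of $h$ is bi-Lipschitz (the Hessian is positive definite in directions tangent to the level set), so there is a $\cM$-constant with $\|I(t)-I(0)\|\le\const\bigl(\|\omega(I(t))-\omega(I(0))\|+\epsilon\bigr)$, the $\const\epsilon$ absorbing the $O(\epsilon)$ deviation of $I(t)$ from $\{h=h(I(0))\}$. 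Hence $\|I(T)-I(0)\|>\epsilon^\delta$ forces $\|\omega(I(T))-\omega(I(0))\|>\const^{-1}\epsilon^\delta$ (using $\epsilon^\delta\gg\epsilon$). Also, since $I(t)\in\cN(\epsilon)$, at each $t$ there is $k(t)$ with $0<|k(t)|\le K$ and $d(\omega(I(t)),R_{k(t)})<\alpha=\alpha(\epsilon)$.

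\textbf{Step 2 (dichotomy) and Step 3 (the two-resonance case).} Either \emph{(A)} there are $t_*\in[0,T]$ and $k_1,k_2\in\Z^n_K\setminus\{0\}$ with $R_{k_1}\ne R_{k_2}$, $d(\omega(I(t_*)),R_{k_1})<\alpha$ and $d(\omega(I(t_*)),R_{k_2})<\alpha$; or \emph{(B)} no such time exists. In case (B) the (connected) trace $\{\omega(I(t)):t\in[0,T]\}$ lies in $\bigcup_{0<|k|\le K}V_\alpha(R_k)$ yet avoids every pairwise intersection $V_\alpha(R_k)\cap V_\alpha(R_{k'})$ with $R_k\ne R_{k'}$; the ``active resonance class'' is then locally constant along the trace, hence constant, so there is a single $k_1$, $0<|k_1|\le K$, with $d(\omega(I(t)),R_{k_1})<\alpha$ for all $t\in[0,T]$. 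In case (A), $k_1,k_2$ are $\Z$-independent integer vectors with $1\le|k_i|\le K$, so the $2\times2$ Gram matrix $G=(k_i\cdot k_j)$ has $\det G=|\Span_\Z\{k_1,k_2\}|^2\ge1$ and trace $\le\const K^2$, whence $\|G^{-1}\|\le\const K^2$; this bounds the angle between the hyperplanes $R_{k_1},R_{k_2}$ below by $(\const K^2)^{-1}$, and being $\alpha$-close to both puts $\omega(I(t_*))$ within $\const\,\alpha K^{3}$ of $R_{k_1}\cap R_{k_2}=R_{k_1,k_2}$. Since $\alpha=\beta^{-2}\epsilon^{1/2}K$ and $K=-L\log\epsilon$ satisfies $K^{2}\le\epsilon^{-\delta}$ for $\epsilon$ small, $\const\,\alpha K^3\le C_6\beta^{-2}\epsilon^{1/2-\delta}K^2$, and $|k_1|,|k_2|\le K\le C_6\epsilon^{-\delta}$; this is exactly the conclusion of the lemma.

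\textbf{Step 4 (the single-resonance case is impossible) and the main obstacle.} It remains to rule out (B). The set $D=\{I:\ \omega(I)\in V_\alpha(R_{k_1}),\ d(\omega(I),R_k)\ge\alpha\ \text{for all}\ k\in\Z^n_K\setminus\Z k_1\}$ is $\const^{-1}\alpha,K$-nonresonant modulo $\Lambda=\Z k_1$, so Lemma~\ref{lem:res-normal-form} (with $r,s$ from \eqref{eq:para}) yields $\Phi$ with $H\circ\Phi=h+g_1+f_1$, $P_\Lambda g_1=g_1$, $\|f_1\|\le\epsilon^{N+1}$, $\|\Pi_I\Phi-I\|\le\epsilon^{1/2}$. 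For the truncated system $h+g_1$ (which depends on $\theta$ only through $k_1\cdot\theta$) one has $\dot I\parallel k_1$, so the $k_1^\perp$-components of $I$ are exact integrals and what remains is a one-degree-of-freedom (hence integrable) pendulum in which, by quasi-convexity, $k_1^{T}\nabla^2h\,k_1\ge m\|k_1\|^2$ on $D$; energy conservation then confines the $k_1$-component of $I$ to an interval of length $\le\const\,\alpha/m$. Controlling the extra drift coming from $f_1$ over $[0,T]$ (via Theorem~\ref{thm:res-stability} with $d=1$, applied with the small parameter taken $\asymp\alpha^2$ so that the $\alpha$-closeness provided by $\cN(\epsilon)$ meets its $\sqrt{\,\cdot\,}$-closeness hypothesis) keeps the total drift at $\le\const(\alpha+\epsilon^{1/(2(n-1))}|\log\epsilon|^{O(1)}+\epsilon^{1/2})\ll\epsilon^\delta$ for all admissible $\delta<\tfrac1{2n+4}$, contradicting $\|I(T)-I(0)\|>\epsilon^\delta$. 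Thus case (B) never occurs. The main obstacle is precisely this Step~4: making the confinement near a single $1$-resonance quantitative and valid over the \emph{whole} interval $[0,T]$ with no a priori bound on $T$ — in particular reconciling the $\alpha$-neighborhoods built into $\cN(\epsilon)$ (a factor $\sim K$ wider than the $\sqrt\epsilon$-scale on which the single-resonance stability theorem is cleanest) with the hypotheses of Theorem~\ref{thm:res-stability}, and tracking every constant against the choices $L=12s_0$, $K=-L\log\epsilon$, $r=\beta^{-1}\epsilon^{1/2}$, $\alpha=\beta^{-1}rK$ of \eqref{eq:para}. (The bound $|k_2|\le C_6\epsilon^{-\delta}$ is far from sharp in case (A), where $|k_2|\le K$; it is stated in this form because that is what the downstream application of Theorem~\ref{thm:res-stability} with $d=2$ in Proposition~\ref{prop:1-res-stab} requires.)
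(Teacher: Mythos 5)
Your proposal takes a genuinely different route, and it does not close. The paper's proof is \emph{purely geometric} (using only energy conservation): it projects to the normalized frequency $\omega/|\omega|$, observes that some component $\omega_i/|\omega|$ sweeps an interval of length $\gtrsim\epsilon^\delta$, and then uses the rational-approximation Lemma~\ref{lem:rational} to locate a time $t_*$ at which $\omega_i(t_*)/|\omega(t_*)|$ is \emph{exactly} a rational $p/q$ with $K<|q|\lesssim\epsilon^{-\delta}$. The second resonance vector is then $k_2 = q e_i - p e_j$ with $|k_2|\sim\epsilon^{-\delta}$, and linear independence from any $K$-vector $k_1$ is free because $|q|>K$. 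Note that $k_2$ is \emph{not} one of the $K$-resonances defining $\cN(\epsilon)$ -- it is a new resonance of intermediate order $\epsilon^{-\delta}\gg K$ that the moving frequency is forced to hit. That is the idea absent from your proposal.

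Your dichotomy substitutes something weaker. In case (A) you only ever produce a pair $k_1,k_2$ both with $|k_i|\le K$, which is a much stronger conclusion than the lemma asserts, and case (A) simply need not occur: the trace of $\omega$ can drift a distance $\epsilon^\delta$ while staying in a single $\alpha$-tube $V_\alpha(R_{k_1})$ and never approaching $V_\alpha(R_{k'})$ for any other $K$-vector $k'$ (for $n\ge 3$ the tube is $(n-1)$-dimensional and the bad set $\bigcup_{k'}V_\alpha(R_{k'})$ does not separate it). So the burden falls entirely on case (B), and there the argument breaks in two ways. First, the lemma is a statement about an arbitrary orbit segment with no hypothesis on $T$; ruling out (B) via the $d=1$ stability Theorem~\ref{thm:res-stability} only controls the orbit up to an exponentially long but \emph{finite} time, so (B) is not excluded for general $T$ and the lemma as stated is not proved. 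Second, even if you restrict attention to the $T$ of Proposition~\ref{prop:1-res-stab}, the $d=1$ stability time you get (applying Theorem~\ref{thm:res-stability} with the enlarged small parameter $\asymp\alpha^2=\beta^{-4}\epsilon K^2$ and $|\Lambda|\le\sqrt n\,K$) is $\exp\bigl(c\,\epsilon^{-1/(2n-2)}K^{-4/(2n-2)}\bigr)$, whereas $T\asymp\exp\bigl(c\,\epsilon^{-(1-8\delta)/(2n-4)}\bigr)$; comparing exponents shows your Step~4 only covers $[0,T]$ when $\delta>\tfrac{1}{4(2n-2)}$, which excludes exactly the small-$\delta$ regime that makes the main theorem's improved exponent interesting. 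You flag Step~4 as the ``main obstacle'' -- correctly -- but the obstacle is not a matter of tracking constants: it is that the right tool here is rational approximation along the frequency trace, not a single-resonance stability estimate.

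Steps 1--3 are individually fine (the reduction to frequency-space drift via energy conservation and quasi-convexity parallels the paper's use of the local diffeomorphism $\Psi_h$, and the Gram-matrix angle estimate in case (A) is a legitimate variant of the paper's $\sin\angle(k_1,k_2)\ge 1/(|k_1||k_2|)$ bound). But without Lemma~\ref{lem:rational} or an equivalent pigeonhole, the proposal does not reach the conclusion.
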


\begin{figure}[t]
  \centering
  \includegraphics[width=2.5in]{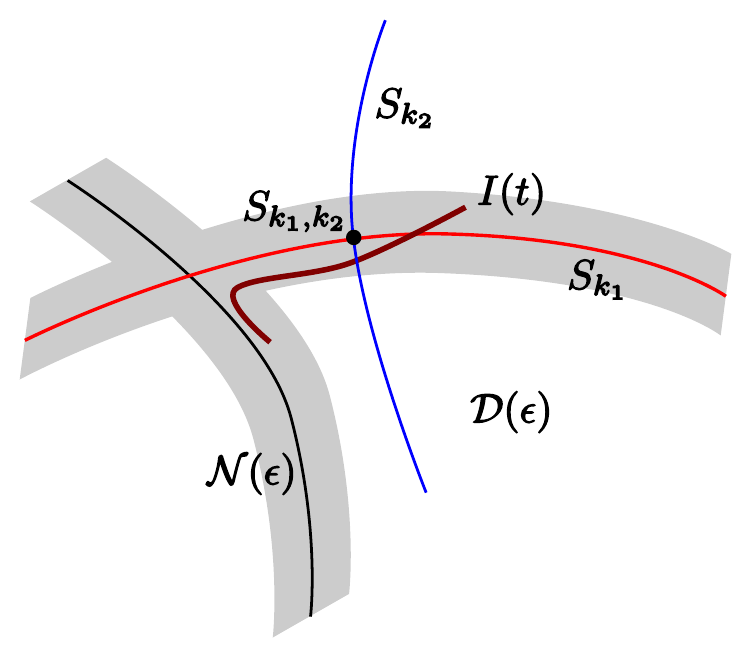}
  \caption{Any curve of sufficient length in $\cN(\epsilon)$ must pass close to a double resonance}
\end{figure}

First we have the following lemma, which is a modified version of Lemma 3.4 from \cite{BM11}. 
\begin{lem}\label{lem:rational}
Let $I \subset [-1, 1]$ be a closed interval of length $l > 0$. Suppose $0 < K^2 < 2 l^{-1}$, then there is an irreducible rational number $p/q \in I \cap \Q$ such that 
\[
	K < q < 3 l^{-1}. 
\]
\end{lem}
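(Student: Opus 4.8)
\textbf{Proof proposal for Lemma~\ref{lem:rational}.}

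The plan is to run a greedy/pigeonhole argument on continued fractions (equivalently, on the Stern--Brocot / Farey structure) restricted to the interval $I$. Since $I \subset [-1,1]$ has length $l$, I first observe that the hypothesis $K^2 < 2l^{-1}$ forces $K < 3l^{-1}$, so the target range $(K, 3l^{-1})$ for the denominator $q$ is nonempty; this is the sanity check that makes the statement non-vacuous.

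The core step: among all rationals $p/q$ lying in $I$, consider one with the \emph{smallest possible} denominator $q_0$. Standard Farey-interval facts give a two-sided control on $q_0$ in terms of $l$: on one hand any interval of length $l$ contains a rational with denominator at most roughly $\lceil 1/l\rceil$ (look at the points $j/\lceil 1/l\rceil$, consecutive ones are within $l$), so $q_0 \lesssim l^{-1}$; on the other hand, if $q_0$ were \emph{too} small — concretely $q_0 \le K$ — then I want to produce a second rational in $I$ with denominator in the desired window. For this I use the mediant construction: if $p_0/q_0 \in I$ with $q_0$ small, the neighbours of $p_0/q_0$ in a Farey sequence of order $N$ are at distance $\ge \tfrac{1}{q_0 N}$, and by inserting mediants one walks away from $p_0/q_0$ in steps that land new rationals with controlled denominators, all while staying inside $I$ as long as we have not moved more than $l$. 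Choosing the order of the Farey sequence appropriately (of size comparable to $l^{-1}$), the first rational we meet in $I$ other than $p_0/q_0$ has denominator $q$ with $q > K$ (because $p_0/q_0$ was the \emph{unique} rational in $I$ of denominator $\le K$ once $q_0 \le K$, or there is none at all and then $q_0$ itself already exceeds $K$), and $q < 3l^{-1}$ by the length bound. Taking that rational in lowest terms keeps it irreducible and does not increase $q$.

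I expect the main obstacle to be the bookkeeping in the "if the minimal denominator is $\le K$" branch: one has to be sure that \emph{some} Farey neighbour (or mediant descendant) of $p_0/q_0$ actually falls inside $I$ rather than just adjacent to it, and that its denominator genuinely clears $K$ rather than merely exceeding $q_0$. The clean way to handle this is to pick the integer $N$ with $l^{-1} < N \le 2l^{-1}$ (here $N > K$ by the hypothesis $K^2 < 2l^{-1}$, so $N > K$ as long as $K \ge 1$... more carefully, $N > l^{-1} > K^2/2 \ge K/2$, so after a harmless adjustment of the constant one forces $N > K$), and look at the Farey sequence $F_N$: consecutive terms of $F_N$ differ by at most $1/N < l$, so $I$ contains at least one full gap of $F_N$, hence at least one element of $F_N$; that element has denominator $\le N \le 2l^{-1} < 3l^{-1}$, and if additionally it has denominator $\le K$ we use that two consecutive terms of $F_N$ can't both have denominator $\le K < N$ — their mediant has denominator $\le N$ and lies strictly between them, hence in $I$ if $I$ is wide enough, giving a denominator in $(K, N)$. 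Tightening the constant from $N \le 2l^{-1}$ to the stated $3l^{-1}$ absorbs these mediant steps. Since this is a direct adaptation of Lemma 3.4 of \cite{BM11}, I would cite that argument and only spell out the modification needed to push the lower bound on $q$ from $1$ up to $K$.
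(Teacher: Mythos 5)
Your proposal is in the same spirit as the paper's proof (both are pigeonhole arguments built on the fact that reduced fractions with small denominators are spread out, and both trace back to Lemma~3.4 of \cite{BM11}), but the Farey-sequence route you take has a concrete arithmetic gap that the paper's more direct argument avoids.

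The issue is the mediant step. If $a/b < c/d$ are consecutive in $F_N$ then $b+d > N$ (their mediant, with denominator $b+d$, is not in $F_N$). So ruling out ``both $b,d\le K$'' requires $2K\le N$, not merely $K<N$. You only argue for $N>K$, and even that verification does not go through as written: from $N>l^{-1}>K^2/2$ and $K^2/2\ge K/2$ you get $N>K/2$, which is weaker than $N>K$. More to the point, with $N\approx 2l^{-1}>K^2$, the needed bound $N\ge 2K$ follows only when $K\ge 2$ (and $N\ge 2K$ \emph{comfortably} only when $K\ge 4$); the small-$K$ cases would have to be treated separately, which your sketch does not do. There is a second loose end: you need two \emph{consecutive} elements of $F_N$ actually lying in $I$, not just ``a full gap of $F_N$ contained in $I$'', and this again asks for spacing $1/N<l/2$ rather than $1/N<l$, tightening the constants further.

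The paper sidesteps all of this by not going through Farey sequences at all. It sets $Q=\lfloor 3l^{-1}\rfloor$ and picks two \emph{consecutive multiples} $m/Q,(m+1)/Q\in I$, which exist because $1/Q<l/2$. Since two distinct reduced fractions with denominators $\le K$ differ by at least $K^{-2}$, and the hypothesis $K^2<2l^{-1}<Q$ gives $K^{-2}>Q^{-1}$, at most one of the two multiples can reduce to a denominator $\le K$; the other one, reduced, has denominator in $(K,Q]\subset(K,3l^{-1})$. This uses the sharp separation $K^{-2}>Q^{-1}$ exactly, with no factor-of-two loss, and therefore works uniformly in $K$. If you want to keep the Farey framing, you would need to replace ``$K<N$'' by ``$2K\le N$'', deduce that from $K^2<2l^{-1}$ (valid for $K\ge 2$), and dispose of $K<2$ by hand; but the paper's multiples-of-$1/Q$ argument is both shorter and tighter.
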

\begin{proof}
Let $Q = \floor{3l^{-1}} > 2 l^{-1}$, then there is $m \in \Z$ such that $\frac{m}{Q}, \frac{m+1}{Q} \in I$. We now show at least one of them satisfies the conclusion of the lemma. Indeed, if $\frac{p_1}{q_1}, \frac{p_2}{q_2} \in \Q$ are distinct and $|q_1|, |q_2| \le K$, then $\left| \frac{p_1}{q_1} -  \frac{p_2}{q_2} \right| \ge \frac{1}{|q_1q_2|} > K^{-2} > Q^{-1}$, therefore at most one of $\frac{m}{Q}$ and $\frac{m+1}{Q}$ can have denominator bounded by $K$ when reduced. 
\end{proof}

\begin{proof}[Proof of Lemma~\ref{lem:cross-res}]
	The proof is inspired by Lemma~3.3 of \cite{BM11}. Consider the map 
	\[
	\Psi_h: (I, \lambda) \mapsto (h(I), \lambda \omega(I)),
	\]
	then $\Psi_h$ is a local diffeomorphism. Therefore, there exists $\rho_0, C >0$ depending on $\cM$ such that 
	\[
	|\Psi_h(I_1, \lambda_1) - \Psi_h(I_2, \lambda_2)| \ge C^{-1} |(I_1 - I_2, \lambda_1 - \lambda_2)|, \quad 
	\text{ if } |(I_1- I_2, \lambda_1 - \lambda_2)| < \rho_0. 
	\]
	Suppose $\epsilon_0$ is small enough that $\epsilon_0^\delta < \rho_0$. Write $\omega(t) = \omega(I(t))$ and  $t_0$ be the first time the curve $(I(t), |\omega(t)|^{-1})$ leaves the $\epsilon^\delta$ neighborhood of $(I(0), |\omega(0)|^{-1})$, with $0 < \epsilon < \epsilon_0$. Then the above observation implies 
	\[
	\left|\left( h(I(t_0))- h(I(0)),\,   \frac{\omega(t_0)}{|\omega(t_0)|} - \frac{\omega(0)}{|\omega(0)|} \right) \right| > C^{-1} \epsilon^\delta. 
	\]
	Since energy conservation implies $|h(I(t_0))- h(I(0))| < 2\epsilon$, we obtain 
	\[
	\left| \frac{\omega(t_0)}{|\omega(t_0)|} - \frac{\omega(0)}{|\omega(0)|} \right| > C^{-1}\epsilon^\delta, 
	\]
	which implies for some index $i \in \{1, \cdots, n\}$, the interval $\{ \omega_i(t)/|\omega(t)|: t \in [0, t_0]\} \subset [-1, 1]$ has  length at least $C^{-1}\epsilon^\delta$. Then according to Lemma~\ref{lem:rational}, there exists irreducible $p/q \in \Q$ with $K < |q| < 3 C\epsilon^{-\delta}$ and $t_* \in [0, t_0]$, such that $\omega_i(t_*)/|\omega(t)| = p/q$. Let $j \in \{1, \cdots, n\}$ such that $\omega_j(t_*)/|\omega(t_*)| = 1$, (which exists since $|\omega|=\sup_j|\omega_j|$), it follows that for $k_2 = q e_i + p e_j \in \Z^n$, where $e_i$ denotes the coordinate vectors, we have 
	\[
	k_2 \cdot \omega(t_*) = 0, \quad |k_2| \le 6 C \epsilon^{-\delta}. 
	\]
	Moreover, since $|q| > K$ and $k_2$ is irreducible, $k_2$ cannot be generated by any vector with $|k| \le K$, therefore $\{k_1, k_2\}$ is linearly independent. 

	Since $I(t_*) \in \cN(\epsilon)$, there exists $0 < |k_1| \le K$ such that  $\dist(\omega(t_*), R_{k_1}) < \alpha(\epsilon)$. Let $\bar\omega$ be the projection of $\omega(t_*)$ to the hyperplane $R_{k_1} \cap R_{k_2}$, we first note
	\[
	\sin \angle(R_{k_1}, R_{k_2}) = \sin \angle (k_1, k_2) = \frac{\sqrt{\|k_1\|^2 \|k_2\|^2 - (k_1 \cdot k_2)^2}}{\|k_1\| \|k_2\|} \ge \frac{1}{|k_1| |k_2|}, 
	\]
	 then 
	\[
	|\omega(t_*) - \bar\omega| \le \frac{d(\omega(t_*), R_{k_1})}{\sin \angle (R_{k_1}, R_{k_1})} 
	\le \alpha(\epsilon)|k_1| |k_2| \le 6C \alpha(\epsilon) \epsilon^{-\delta} K(\epsilon)
	\]
	and the lemma follows from taking $C_6 = 6C$, and plugging in $\alpha(\epsilon) = \beta^{-1} r(\epsilon)K(\epsilon)$ and $r(\epsilon) = \beta^{-1} \epsilon^{\frac12}$. 
\end{proof}

According to our definition, $R_{k_1, k_2}$ is generated by the module $\Span_\Z\{k_1, k_2\}$, which is not necessarily maximal. 
In order to apply Theorem~\ref{thm:res-stability}, we need the following lemma. 
\begin{lem}\label{lem:Lb-bound}
Suppose $\Lambda$ is the  maximal module containing $k_1, k_2$, namely
\[
\Lambda = \Span_\R\{k_1, k_2\} \cap \Z^n	
\]
where $\{k_1, k_2\}$ is linearly independent. Then $\Lambda$ is a $|k_1| + |k_2|$-lattice, and $|\Lambda| \le |k_1| |k_2|$. 
\end{lem}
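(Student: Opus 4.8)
The plan is to prove the two assertions separately, using only elementary lattice geometry. Let $\Lambda = \Span_\R\{k_1,k_2\} \cap \Z^n$ be the saturation of the rank-$2$ sublattice generated by $k_1,k_2$.

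First I would handle the volume bound $|\Lambda| \le |k_1|\,|k_2|$. Pick any $\Z$-basis $\{v_1,v_2\}$ of $\Lambda$. Since $\Span_\Z\{k_1,k_2\} \subseteq \Lambda$ with both of rank $2$, we can write $(k_1 \mid k_2) = (v_1 \mid v_2)A$ for some integer matrix $A$ with $\det A \ne 0$, hence $|\det A| \ge 1$. Taking determinants of Gram matrices gives $\det\!\big((k_1\mid k_2)^T(k_1\mid k_2)\big) = (\det A)^2 \det\!\big((v_1\mid v_2)^T(v_1\mid v_2)\big)$, i.e. $(|k_1|\,|k_2|\sin\angle(k_1,k_2))^2 = (\det A)^2 |\Lambda|^2 \ge |\Lambda|^2$. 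Therefore $|\Lambda| \le |k_1|\,|k_2|\sin\angle(k_1,k_2) \le |k_1|\,|k_2|$, which is the desired bound (and in fact a little sharper). The only thing to check carefully here is that $\det\!\big((v_1\mid v_2)^T(v_1\mid v_2)\big) = |\Lambda|^2$ by the very definition of $|\Lambda|$, and that $A$ is genuinely an integer matrix since $k_1,k_2 \in \Lambda$ and $\{v_1,v_2\}$ is a basis of $\Lambda$.

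Second, for the statement that $\Lambda$ is a $(|k_1|+|k_2|)$-lattice, I need to produce a $\Z$-basis $\{v_1,v_2\}$ of $\Lambda$ with $|v_1|,|v_2| \le |k_1|+|k_2|$. I would argue as follows. Start from the basis $\{k_1,k_2\}$ of the finite-index sublattice $\Span_\Z\{k_1,k_2\}$, and perform a reduction: as long as the current basis is not a basis of $\Lambda$, there is a lattice point $w = a k_1 + b k_2 \in \Lambda$ with $a,b \in \Q$ not both integers; replacing $k_i$ by $w$ for a suitable $i$ strictly decreases the covolume, so the process terminates after finitely many steps at a basis of $\Lambda$. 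The quantitative control comes from the following observation: one can always choose the replacement so that the new basis vector is an integer combination $w = a'k_1 + b'k_2$ with $|a'|,|b'| \le 1$; indeed, if $w = ak_1+bk_2 \in \Lambda$ has, say, $a \notin \Z$, then reducing $a$ and $b$ modulo $1$ produces $w' = \{a\}k_1 + \{b\}k_2 \in \Lambda$ with coefficients in $[0,1)$, and by the triangle inequality $|w'| \le |k_1|+|k_2|$. Pairing such a short vector with whichever of $k_1,k_2$ keeps the pair independent, and iterating, yields a basis of $\Lambda$ all of whose vectors have norm at most $|k_1|+|k_2|$. (Alternatively, and perhaps more cleanly, one invokes that every rank-$2$ lattice has a reduced basis, and bounds the shortest two successive minima by $|k_1|,|k_2|$ via Minkowski's second theorem together with the covolume bound from the first part.)

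I expect the main obstacle to be the second assertion: getting a clean, rigorous argument that the saturation $\Lambda$ has a basis bounded by $|k_1|+|k_2|$ rather than some larger multiple. The naive reduction as above does give $|k_1|+|k_2|$, but one has to be careful that after replacing a basis vector the pair remains independent and that the coefficients can genuinely be taken in $[0,1)$ at each step while the covolume strictly decreases; the slickest route is really the successive-minima argument, since a reduced basis $\{u_1,u_2\}$ of $\Lambda$ satisfies $|u_1| \le |u_2|$ with $|u_1|$ the first minimum and $|u_1|\,|u_2| \asymp |\Lambda| \le |k_1|\,|k_2|$, and since $k_1,k_2 \in \Lambda$ are independent one gets $|u_2| \le \max(|k_1|,|k_2|) \le |k_1|+|k_2|$ directly. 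Either way the computations are short and elementary; no deep input beyond the definition of $|\Lambda|$ and basic reduction theory for planar lattices is needed.
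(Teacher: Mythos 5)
Your argument for the volume bound $|\Lambda|\le|k_1|\,|k_2|$ is essentially identical to the paper's: write the given generators as a basis of the saturated lattice times an integer matrix of determinant at least $1$, compare Gram determinants, and note $\|k\|\le|k|$. For the $(|k_1|+|k_2|)$-lattice claim, however, you take a genuinely different route. The paper proves it by a one-shot Hermite--normal--form-style construction: it keeps $k_1$ and defines $k_2'=s_1k_1+s_2k_2$, where $s_2\in(0,1]$ is the minimal positive $t_2$ so that $\R k_1+t_2k_2$ meets $\Lambda$, and $s_1\in[0,1)$ is then chosen minimal; a short minimality argument shows $\{k_1,k_2'\}$ is a $\Z$-basis and $|k_2'|\le s_1|k_1|+s_2|k_2|<|k_1|+|k_2|$. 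Your iterative reduction in route (a) is in the same spirit but, as you yourself flag, it is not watertight as written: after the first replacement the ``current basis'' is no longer $\{k_1,k_2\}$, and reducing subsequent vectors modulo the \emph{original} pair does not obviously produce something outside the current sublattice, so the termination-plus-norm-control bookkeeping needs the kind of care the paper supplies by fixing $k_1$ once and for all. Your route (b) via successive minima is clean and genuinely different from the paper's: since $k_1,k_2\in\Lambda$ are independent, $\lambda_1\le\min(|k_1|,|k_2|)$ and $\lambda_2\le\max(|k_1|,|k_2|)$, and in rank two (for any norm) a Minkowski-reduced basis $\{u_1,u_2\}$ realizes the successive minima, so $\Lambda$ is even a $\max(|k_1|,|k_2|)$-lattice --- slightly sharper than what the paper states and certainly sufficient for the application. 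Note that Minkowski's second theorem and the covolume bound are not actually needed for this step; the direct observation in your final sentence does all the work. What the paper's approach buys is self-containedness (an explicit basis, no appeal to reduction theory); what yours buys is brevity and a marginally better constant.
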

\begin{proof}
The lemma is non-trivial because $k_1, k_2$ does not necessary generate $\Lambda$ over $\Z$. 

We first derive a relation for arbitrary number of generators. Suppose $\Lambda$ is the maximal module containing $k_1, \cdots, k_d$, let $k_1', \cdots, k_d'$ generate $\Lambda$ over $\Z$. Let $A$ be the matrix with columns $k_1, \cdots, k_d$, and $B$ with columns $k_1', \cdots, k_d'$. Then there exist invertible $d\times d$ integer matrix $G$ such that 
\[
	A = B G. 
\]
Then 
\[
	\det (A^T A) = \det (G^T B^T B G) = \det (G^T) \det (B^T B) \det (G) \ge \det(B^T B) = |\Lambda|^2. 
\]
We now go to the case $d=2$, we  have $|\Lambda|^2 = \|k_1\|^2 \|k_2\|^2 - (k_1 \cdot k_2)^2 \le \|k_1\|^2 \|k_2\|^2$, and the estimate follows from $\|k\| < |k|$. 

To prove $\Lambda$ is a $|k_1| + |k_2|$ lattice, we claim there exists $k_1', k_2'$ generating $\Lambda$ with $|k_1'|, |k_2'| \le |k_1| + |k_2|$. The argument presented here is based on the more general argument in \cite{Sie}, Theorem 18. Define 
\[
	s_2 = \min\{t_2>0: \, \R k_1 + t_2 k_2 \cap \Lambda \ne \emptyset\},\quad
	 s_1 = \min\{t_1 \ge 0: \, t_1 k_1 +  s_2 k_2 \in \Lambda \},
\]
and $k_2' = s_1 k_1 + s_2 k_2$. We now show $k_1', k_2'$ generates $\Lambda$ over $\Z$. For any $k \in \Lambda$, there exists $t_1, t_2 \in \R$ such that $k = t_1 k_1' + t_2 k_2'$. Assume that $t_2 \notin \Z$, then there exists $n \in \Z$ such that $0 < a = t_2 + n < 1$. We have 
\[
	k + n k_2' = t_1 k_1' + a k_2' = (t_1+ a s_1) k_1 + a s_2 k_2 \in \Lambda.
\]
Since $0 < a s_2 < s_2$, this contracts with the minimality of $s_2$. As a result $t_2 \in \Z$. We can show $t_1 \in \Z$ by the same argument. Since $0 \le s_1 < 1$ and $0 < s_2 \le 1$ by definition, we know $|k_2'| < |k_1| + |k_2|$. 
\end{proof}

\vspace{10pt}
\begin{proof}[Proof of Proposition~\ref{prop:1-res-stab}]
Suppose $(\theta, I)(t)$, $t \in [0, T]$ is an orbit satisfying $I(t) \in \cN(\epsilon)$ for all $t$. Arguing by contradiction, suppose  $|I(t) - I(0)| > \epsilon^\delta$ for some $t \in [0, T]$. We apply Lemma~\ref{lem:cross-res}, to obtain that there exists $t_* \in [0, T]$, and $|k_1| \le K(\epsilon)$, $|k_2| \le C \epsilon^{-\delta}$, such that 
\[
	d(\omega(t_*), R_{k_1, k_2}) < C_6 \beta^{-2} \epsilon^{\frac12 - \delta} K^2(\epsilon).
\]

We will pick $\epsilon_0$ depending on $\delta$ such that for all $\epsilon< \epsilon_0$, we have 
\[
	K(\epsilon) = - L\log\epsilon \le \epsilon^{-\delta}. 
\]
Let $\Lambda = \Span_\R \{ k_1, k_2\} \cap \Z^n$ be the maximal lattice generated by $k_1, k_2$. According to Lemma~\ref{lem:Lb-bound}, 
\[
	K_\Lambda \le K(\epsilon) + C_6\epsilon^{-\delta} \le 2C_6 \epsilon^{-\delta}, \quad 
	1 \le |\Lambda| \le C_6 K(\epsilon) \epsilon^{-\delta} \le C_6 \epsilon^{- 2\delta} . 
\]

We attempt to apply Theorem~\ref{thm:res-stability} near the resonance $R_\Lambda$. Set 
\[
	\epsilon_2 = C_5 \left( C_6 \beta^{-2} \epsilon^{\frac12 -\delta} K^2 \right) = C_5 C_6^2 \beta^{-4} \epsilon^{1-2\delta} K^4 \le C_5 C_6^2 \beta^{-4} \epsilon^{1-6\delta}. 
\]
Then $d(\omega(t_*), R_{k_1, k_2}) < C_5^{-1} \sqrt{\epsilon_2}$. 
Then 
\[
	\epsilon_\Lambda = C_5^{-1} |\Lambda|^{-2}   \ge (C_5 C_6)^{-1} \epsilon^{2\delta}, \quad 
	\frac{\epsilon_\Lambda}{K_\Lambda^{2(n-2)}} \ge \frac{(C_5C_6)^{-1}\epsilon^{2\delta}}{(2C_6)^{2n-2}\epsilon^{-\delta(2n-4)}} 
	\ge C_7^{-1} \epsilon^{(2n-2)\delta}
\]
for $C_7 = (C_5C_6)(2C_6)^{2n-2}$. Then if $(2n+4)\delta < 1$ and $\epsilon$ small enough depending on $C_5, C_6, C_7, \beta$ and $\delta$, we have 
\[
	\epsilon_2 \le C_5 C_6^2 \beta^{-4} \epsilon^{1-6\delta} <  C_7^{-1} \epsilon^{(2n-2)\delta} \le  \frac{\epsilon_\Lambda}{K_\Lambda^{2(n-2)}},
\]
and 
\[
	\epsilon_2 /\epsilon_\Lambda \le C_5 C_6^2 \beta^{-4} \epsilon^{1-8\delta}. 
\]
 Theorem~\ref{thm:res-stability} applies and we obtain 
\[
	\|I(t) - I(t_*)\| \le C_5 \left( \frac{\epsilon_2}{\epsilon_\Lambda} \right)^{\frac{1}{2n-4}}
	\le  C_5 (C_5 C_6^2)^{\frac{1}{2n-4}} \left( \epsilon^{1-8\delta} \right)^{\frac{1}{2n-4}} 
	\le C_2 \epsilon^{\frac{1-8\delta}{2n-4}},
\]
for $C_2 = C_5 (C_5 C_6^2)^{\frac{1}{2n-4}}$, as long as 
\[
	|t - t_*| \le C_5^{-1} \exp \left(  C_5^{-1} \left( \frac{\epsilon_\Lambda}{\epsilon_2} \right)^{\frac{1}{2n-4}} \right)   
\]
which is implied by 
\[
	|t - t_*| \le C_2^{-1} \exp \left( C_2^{-1} \epsilon^{-\frac{1-8\delta}{2n-4}} \right). 
\]
This in particular would imply $|I(T)-I(0)| \le |I(T)-I(t_*)| + |I(0) - I(t_*)| \le 2C_2 \epsilon^{\frac{1-8\delta}{2n-4}}$. Since $(2n+4)\delta <1$, then $\frac{1-8\delta}{2n-4} >  \delta$, therefore for $\epsilon$ small enough $|I(T)-I(0)| \le 2C_2 \epsilon^{\frac{1-8\delta}{2n-4}} < \epsilon^\delta$ which is a contradiction.
\end{proof}
\subsection*{Acknowledgments}

The authors thank Abed Bounemoura for valuable discussions. K. Zhang was supported by the NSERC Discovery grant, reference number 436169-2013.

\bibliographystyle{plain}
\bibliography{diffusion}

\def\cprime{$'$}
\begin{thebibliography}{10}

\bibitem{Arn64}
V.~I. Arnol{\cprime}d.
\newblock Instability of dynamical systems with many degrees of freedom.
\newblock {\em Dokl. Akad. Nauk SSSR}, 156:9--12, 1964.

\bibitem{Bes96}
Ugo Bessi.
\newblock An approach to {A}rnol\cprime d's diffusion through the calculus of
  variations.
\newblock {\em Nonlinear Anal.}, 26(6):1115--1135, 1996.

\bibitem{bes97}
Ugo Bessi.
\newblock Arnold's example with three rotators.
\newblock {\em Nonlinearity}, 10(3):763--781, 1997.

\bibitem{BM11}
Abed Bounemoura and Jean-Pierre Marco.
\newblock Improved exponential stability for near-integrable quasi-convex
  hamiltonians.
\newblock {\em Nonlinearity}, 24(1):97, 2011.

\bibitem{BN12}
Abed Bounemoura and Laurent Niederman.
\newblock Generic {N}ekhoroshev theory without small divisors.
\newblock {\em Ann. Inst. Fourier (Grenoble)}, 62(1):277--324, 2012.

\bibitem{LN92}
P.~Lochak and A.~I. Ne{\u\i}shtadt.
\newblock Estimates of stability time for nearly integrable systems with a
  quasiconvex {H}amiltonian.
\newblock {\em Chaos}, 2(4):495--499, 1992.

\bibitem{LM05}
Pierre Lochak and Jean-Pierre Marco.
\newblock Diffusion times and stability exponents for nearly integrable
  analytic systems.
\newblock {\em Cent. Eur. J. Math.}, 3(3):342--397 (electronic), 2005.

\bibitem{Loc92}
P.~Loshak.
\newblock Canonical perturbation theory: an approach based on joint
  approximations.
\newblock {\em Uspekhi Mat. Nauk}, 47(6(288)):59--140, 1992.

\bibitem{Nek77}
N.~N. Nekhoroshev.
\newblock An exponential estimate of the time of stability of nearly integrable
  {H}amiltonian systems.
\newblock {\em Uspehi Mat. Nauk}, 32(6(198)):5--66, 287, 1977.

\bibitem{Nek79}
N.~N. Nekhoroshev.
\newblock An exponential estimate of the time of stability of nearly integrable
  {H}amiltonian systems. {II}.
\newblock {\em Trudy Sem. Petrovsk.}, (5):5--50, 1979.

\bibitem{Nie07}
Laurent Niederman.
\newblock Prevalence of exponential stability among nearly integrable
  {H}amiltonian systems.
\newblock {\em Ergodic Theory Dynam. Systems}, 27(3):905--928, 2007.

\bibitem{Pos93}
J{\"u}rgen P{\"o}schel.
\newblock Nekhoroshev estimates for quasi-convex {H}amiltonian systems.
\newblock {\em Math. Z.}, 213(2):187--216, 1993.

\bibitem{Sie}
Carl~Ludwig Siegel.
\newblock {\em Lectures on the geometry of numbers}.
\newblock Springer-Verlag, Berlin, 1989.
\newblock Notes by B. Friedman, Rewritten by Komaravolu Chandrasekharan with
  the assistance of Rudolf Suter, With a preface by Chandrasekharan.

\bibitem{Zha11}
Ke~Zhang.
\newblock Speed of {A}rnold diffusion for analytic {H}amiltonian systems.
\newblock {\em Invent. Math.}, 186(2):255--290, 2011.

\end{thebibliography}

\end{document}